\newtheorem{theorem}{Theorem}
\newtheorem{lemma}{Lemma}
\newtheorem{definition}{Definition}
\newtheorem{assumption}{Assumption}
\newtheorem{example}{Example}
\newcommand{\subscr}[2]{#1_{\textup{#2}}}
\newcommand{\real}{{\mathbb{R}}}
\newcommand{\realnonnegative}{{\mathbb{R}}_{\ge 0}}
\newcommand{\naturals}{\mathbb{N}}
\newcommand{\Xsp}{\mathbb{X}}
\newcommand{\Wsp}{\mathbb{W}}
\newcommand{\Ysp}{\mathbb{Y}}
\newcommand{\Vsp}{\mathbb{V}}
\newcommand{\yvec}{\mathbf{y}}
\newcommand{\dvol}{\operatorname{dvol}}
\newcommand{\vol}{\operatorname{vol}}
\newcommand{\diff}{\operatorname{d}}
\newcommand{\hess}{\operatorname{Hess}}
\newcommand{\supp}{\operatorname{supp}}
\newcommand{\RMSE}{\operatorname{RMSE}}
\newcommand{\longthmtitle}[1]{\mbox{}{\bf
      \textit{(#1).}}}
\newcommand\oprocendsymbol{\hbox{$\bullet$}}
\newcommand\oprocend{\relax\ifmmode\else\unskip\hfill\fi\oprocendsymbol}
\title{\LARGE \bf A Probabilistic Framework for Moving-Horizon
  Estimation: Stability and Privacy Guarantees}
\author{Vishaal Krishnan \and Sonia Mart{\'\i}nez
\thanks{The authors
    are with the Department of Mechanical and Aerospace Engineering,
    University of California at San Diego, La Jolla CA 92093 USA
    (email: v6krishn@ucsd.edu; soniamd@ucsd.edu).}}
\date{}
\begin{document}
\sloppy
\thispagestyle{empty}
\pagestyle{empty}
\maketitle
\begin{abstract}
  This work proposes a unifying probabilistic framework for the
  design of robustly asymptotically stable moving-horizon 
  estimators (MHE) for discrete-time nonlinear systems, 
  and a mechanism to incorporate differential privacy
  in moving-horizon estimation. We begin with an
  investigation of the classical notion of strong local observability
  of nonlinear systems and its relationship to optimization-based
  state estimation.  We then present a general moving-horizon
  estimation framework for strongly locally observable systems, as an
  iterative minimization scheme in the space of probability
  measures. This framework allows for the minimization of the
  estimation cost with respect to different metrics.  In particular,
  we consider two variants, which we name $W_2$-MHE and KL-MHE, where
  the minimization scheme uses the 2-Wasserstein distance and the
  KL-divergence, respectively.  The $W_2$-MHE yields a gradient-based
  estimator whereas the KL-MHE yields a particle filter, for which we
  investigate asymptotic stability and robustness properties.
  Stability results for these moving-horizon estimators are derived in
  the probabilistic setting, against the backdrop of the classical
  notion of strong local observability which, to the best of our
  knowledge, differentiates it from other previous works.  We then
  propose a mechanism to encode differential privacy of the 
  measurements used by the estimator via an 
  entropy regularization of
  the MHE objective functional. In particular, we find sufficient
  bounds on the regularization parameter to achieve the desired level
  of differential privacy. Numerical simulations demonstrate the
  performance of these estimators.
\end{abstract}

\section{Introduction}
Moving-horizon estimation (MHE) is an optimization-based state
estimation method that uses the most recent measurements within a
moving-time horizon to recursively update state estimates. 
In principle, its optimization-based formulation enables it to handle
nonlinearities and state constraints much more effectively than other
known methods. This, coupled with the adoption of increasingly
powerful, inexpensive computing platforms has brought new impetus to
the adoption of moving-horizon estimation in various data-driven
applications.  In many cases, data is acquired from particular
individuals or users, which introduces new ethical concerns about data
collection and manipulation, highlighting an increasing need for data
privacy. Such is the case in home monitoring and traffic estimation
(with vehicle GPS data) applications, to name a few. Motivated by
this, here we design and analyze a new class of moving-horizon
estimation filters that can guarantee the differential-privacy of the
data.

The origins of MHE can be traced back to the limited memory optimal
filters introduced in~\cite{AJ:68}. Theoretical investigations on MHE
have broadly been directed at their asymptotic
stability~\cite{CR-JR-DM:03, AA-MB-GB:08, AW-MV-MD:14} and
robustness~\cite{LJ-JBR-WH-AW-MD:16, MM:17, WH:17} properties.  These
properties have primarily been built upon underlying assumptions of
input/output-to-state (IOSS) stability, which is adopted as the notion
of detectability, wherein the norm of the state is bounded given the
sequences of inputs and outputs. However, alternative foundations for
the stability results in other classical notions of observability,
such as strong observability~\cite{HN:82}, have remained unexplored.
The connection between nonlinear observability theory and estimation
problems runs deep, see~\cite{TSL-KPD-CBC:82} and more
recently~\cite{JT-CK:18}, and it is worthwhile to explore this
connection in the context of optimization-based estimation methods
such as moving-horizon estimation.

The problem of state estimation is fundamentally about dealing with
uncertainty, manifested as uncertainty in the initial conditions
and/or in the evolution of the system in the presence of unknown
disturbances. This is appropriately formulated in the space of
probability measures over the state space of the system.  Recent
advances in gradient flows in the space of probability
measures~\cite{LA-NG-GS:08},~\cite{FS:17}, and the corresponding
discrete-time movement-minimizing schemes~\cite{GP:15} present
powerful theoretical tools that can be applied to recursive
optimization-based estimation methods such as moving-horizon
estimation, and can serve as a unifying framework for their design and
analysis.

Another important consideration in the MHE problem is the cost of
computation.  The problem formulation more commonly involves solving
an optimization problem at every time instant, with the state estimate
and disturbances as decision variables in the optimization, where the
dimension of the problem scales with the size of the horizon. This
approach, in general, tends to be computationally intensive, which
poses a hurdle for implementation in real-time.  This has motivated
the search for fast MHE that implement one or more iterations of the
optimization at every time instant. Recently, in~\cite{AA-MG:16},
\cite{AA-MG:17}, the authors develop such a method for noiseless
systems and provide theoretical guarantees on convergence. However,
these works assume the convexity of the cost function, which is
restrictive for general nonlinear systems, and not well connected to
notions of observability. None of these works has considered the
additional question of privacy.

Differential privacy~\cite{CD-AR:14} has emerged over the past decade
as a benchmark in data privacy.  The typical setting assumes
independence between the records in static databases; however, basic
existing mechanisms fail to provide guarantees when correlations exist
between the records in the database.  This is the case when data is
employed by a state estimation process whose output is then released:
there is a dynamic system from which a time series of sensor
measurements is obtained, and the measurement data and the released
estimates are correlated.

In~\cite{CD-BN-AM-BR:14, CD-BN-ZZ-AM-BR:17}, the authors generalize
the definition of differential privacy to include general notions of
distance between datasets and design differentially private mechanisms
for Bayesian inference.
%
%
In~\cite{CL-SC-PM:16,
  SS-YW-KC:17}, the authors investigate privacy-preserving mechanisms
for the case where correlations exist between database
records.
Privacy-preserving mechanisms for functions and functional data were
investigated in~\cite{RH-AR-LW:13}.  The work~\cite{JLN-GP:14} studies
the problem of differentially-private state estimation, introducing
the formal notion of differential privacy into the framework of Kalman
filter design for dynamic systems. The authors of~\cite{FF-JM-HS:16}
consider the problem of optimal state estimation for linear
discrete-time systems with measurements corrupted by Laplacian
noise.
A finite-dimensional distributed convex optimization is considered
in~\cite{EN-PT-JC:18-tcns}, where differential privacy is achieved by
perturbation of the objective function. We refer the reader
to~\cite{JC-GED-SH-JLN-SM-GJP:16-cdc} for a broad overview of the
systems and control-theoretic perspective on differential privacy.
%
%
%

\textit{Contributions:} 
The contributions of this work are two-fold:
establishing the robust asymptotic stability of the
proposed moving-horizon estimator in a 
probabilitstic framework, founded on the notion
of strong local observability; and incorporating
differential privacy in moving-horizon estimation.
We begin with the well-studied
notion of strong local observability of nonlinear, discrete-time
systems and investigate its relationship to the optimization-based
state estimation problem.
To handle uncertain initial conditions and the possible non-uniqueness
of solutions to the estimation problem, we adopt a generalized problem
formulation over the space of probability measures over the state
space.
More precisely, we define  the MHE as a proximal gradient descent in
the space of probability measures, with a non-convex, time-varying
cost function.
This probabilistic setting serves as a unifying framework for
moving-horizon estimation and allows us to develop different classes
of moving-horizon estimators by simply varying the metric used to
define the proximal operator, and to obtain implementable filters by
Monte Carlo methods.
We then consider the Wasserstein metric and the KL-divergence, which
yield the more familiar MHE and a particle filter,
respectively. Following this, we present an analysis of the
convergence and robustness properties of these estimators in the
probabilistic setting, under assumptions of strong local
observability.
%
%
Further, we modify the optimization problem (in the space
of probability measures) by an
entropy regularization to derive conditions that guarantee a desired
level of differential privacy for these filters.

\textit{Paper organization:} The rest of the paper is organized as follows.  In
Section~\ref{sec:notation_prelims}, we introduce the notation and
mathematical preliminaries used in the paper.  We present the optimization-based
state estimation problem in Section~\ref{sec:optimal_estimation},
where Section~\ref{sec:FIE} deals with the Full Information Estimation
(FIE) problem and the Moving-horizon Estimation (MHE) problem is
introduced in Section~\ref{sec:MHE}.  We present the MHE method based
on proximal gradient descent with the Wasserstein metric in
Section~\ref{sec:W2-MHE}, and with the KL-divergence in
Section~\ref{sec:KL-MHE}.  In Section~\ref{sec:Diff_privacy}, we
address the differential privacy considerations for the moving-horizon
estimators designed.  The results from numerical experiments are
presented in Section~\ref{sec:numerical_expt}, with the conclusions in
Section~\ref{sec:conclusions}.
\section{Notation and preliminaries}
\label{sec:notation_prelims}
In this section, we introduce the notation and
mathematical preliminaries relevant to this paper.

Let~$\| \cdot \| : \real^d \rightarrow \realnonnegative$ denote the
Euclidean norm on~$\real^d$ and~$| \cdot | : \real \rightarrow
\realnonnegative$ the absolute value function.  We denote by $\nabla =
\left( \frac{\partial}{\partial x_1}, \ldots \frac{\partial}{\partial
    x_n} \right)$ the gradient operator in $\real^d$.  For any~$x \in
\mathcal{X} \subset \real^d$, we let~$\mu \in
\mathcal{P}(\mathcal{X})$ be an absolutely continuous probability
measure on~$\mathcal{X} \subset \real^d$.  We denote by~$\rho$ the
corresponding density function, where~$\diff\mu = \rho\dvol$,
with~$\vol$ being the Lebesgue measure.  For~$M \subseteq
\mathcal{X}$, let the distance~$d(x,M)$ of a point~$x \in \mathcal{X}$
to the set~$M$ be given by~$d(x,M) = \inf_{y \in M} \| x - y \|$.  We
denote by~$\left \langle p, q \right \rangle$ the inner product of
functions~$p,q : \mathcal{X} \rightarrow \real$ with respect to the
Lebesgue measure~$\text{vol}$, given by~$\left \langle p,q \right
\rangle = \int_{\mathcal{X}} pq \dvol$.  Let~$F:
\mathcal{P}(\mathcal{X}) \rightarrow \real$ be a smooth real-valued
function on the space of probability measures on~$\mathcal{X} \subset
\real^d$. We denote by~$\frac{\delta F}{\delta \mu} (x)$ the
derivative of~$F$ with respect to~$\mu$, see~\cite{LCE:98}, such that
a perturbation~$\delta \mu$ of the measure results in a
perturbation~$\delta F = \int_{\mathcal{X}} \frac{\delta F}{\delta
  \mu} d(\delta \mu)$. Given a map~$\mathcal{T} : \mathcal{X}
\rightarrow \mathcal{Y}$ and a measure~$\mu \in
\mathcal{P}(\mathcal{X})$, in the space of probability measures
$\mathcal{P}(\mathcal{X})$, we let~$\nu = \mathcal{T}_{\#} \mu$ denote
the pushforward measure of~$\mu$ by~$\mathcal{T}$, where for a
measurable set~$\mathcal{B} \subset \mathcal{T}(\mathcal{X})$, we
have~$\nu(\mathcal{B}) = \mathcal{T}_{\#} \mu (\mathcal{B}) = \mu
(\mathcal{T}^{-1}(\mathcal{B}))$. Moreover, we denote
by~$\mathbb{E}_{\mu}$ the expectation operator w.r.t. the
measure~$\mu$.

We now introduce the notion of~$l$-smoothness 
that underlies the results on convergence of gradient descent methods.
\begin{definition}\longthmtitle{$l$-smoothness}
  A function $p: \mathcal{X} \rightarrow \real$ is called
  \textit{$l$-smooth} (or \textit{Lipschitz differentiable}) if for
  any~$x, y \in \mathcal{X}$, we have~$|\nabla p(y) - \nabla p(x)|
  \leq l \| y-x \|$.
\end{definition}

The following lemma~\cite{SB-LV:04} can be easily verified for~$l$-smooth functions:
\begin{lemma}\longthmtitle{$l$-smooth functions}
  For an $l$-smooth function $p: \mathcal{X} \rightarrow \real$ and
  any~$x,y \in \mathcal{X}$, we have $| p(y) - p(x) - \langle \nabla
  p(x), y-x \rangle | \leq \frac{l}{2} \| y-x \|^2$.
\label{lemma:cost_func_lipschitz} \oprocend
\end{lemma}

We now define the proximal operator on~$\mathcal{X}$
with respect to a function~$F: \mathcal{X} \rightarrow \real$, as
follows: 
\begin{align*}
  \text{prox}_{F} (x) = \arg \min_{\tilde{x} \in \mathcal{X}}
  \frac{1}{2} \| \tilde{x} - x \|^2 + F(\tilde{x}).
\end{align*}

The notion of observability used in this paper is intricately related
to solutions of inverse problems, with an associated notion of
well-posedness that is introduced below:
\begin{definition}\longthmtitle{Well posedness~\cite{AK:11}}
\label{defn:well_posed}
Let~$\mathcal{X}$ and~$\mathcal{Y}$ be normed spaces, and~$P :
\mathcal{X} \rightarrow \mathcal{Y}$ a mapping. The equation~$P(x) =
y$ is called well-posed if:
\begin{enumerate}
\item \textit{Existence}: For every~$y \in \mathcal{Y}$, there is (at
  least one)~$x \in \mathcal{X}$ such that~$P(x) = y$.
\item \textit{Uniqueness}: For every~$y \in \mathcal{Y}$, there is at
  most~$x \in \mathcal{X}$ such that~$P(x) = y$.
\item \textit{Stability}: The solution~$x$ depends continuously
  on~$y$, that is, for any sequence~$\lbrace x_i \rbrace \subset
  \mathcal{X}$ such that~$P(x_i) \rightarrow P(x)$, it follows
  that~$x_i \rightarrow x$.
\end{enumerate}
\end{definition}

We now introduce the notion of lower semicontinuity of set-valued
maps, which underlies some of the results on optimization-based state
estimation in this paper.
\begin{definition}\longthmtitle{Lower semicontinuity of set-valued maps}
  A point-to-set mapping~$H : \mathcal{Z} \subset \real
  \rightrightarrows \real^d$ is lower semicontinuous at a
  point~$\alpha \in \mathcal{Z}$ if for any~$x \in H(\alpha)$ and
  sequences $\lbrace \alpha_i \rbrace \subseteq \mathcal{Z}$, $\lbrace
  x_i \rbrace \subseteq \real^d$ with~$\lbrace \alpha_i\rbrace
  \rightarrow \alpha$, $ \lbrace x_i \rbrace \rightarrow x$ such
  that~$x_i \in H(\alpha_i)$ for
  all~$i$, 
   it holds that $x \in H(\alpha)$.
  If $H$ is lower semicontinuous at every~$\alpha \in \mathcal{Z}$,
  then~$H$ is said to be lower semicontinuous on~$\mathcal{Z}$.
\end{definition}

We now define some notions of distance in the space of probability
measures. Let~$\mu_1, \mu_2 \in \mathcal{P} (\mathcal{X})$
be two absolutely continuous probability measures on~$\mathcal{X}$, with~$\rho_1,
\rho_2$ being the corresponding density functions.  Also, let~$\Pi
(\mu_1, \mu_2) \subset \mathcal{P}(\mathcal{X} \times \mathcal{X})$ be the space
of joint probability measures that have~$\mu_1$ and~$\mu_2$ as their
marginals. The~$2$-Wasserstein distance~$W_2(\mu_1, \mu_2)$
between~$\mu_1$ and~$\mu_2$ is given by: 
\begin{align*}
  W_2^2(\mu_1, \mu_2) = \inf_{\pi \in \Pi(\mu_1, \mu_2)} \int_{\mathcal{X}
    \times \mathcal{X}} \| x-y \|^2~d\pi(x,y).
\end{align*}

In what follows, we let~$\frac{\delta W_2^2 (\mu_1, \mu_2)}{\delta
  \mu_1} = \phi_1$, where~$\phi_1$ is the so-called the Kantorovich
potential~\cite{FS:17} associated with the transport from~$\mu_1$
to~$\mu_2$.

The KL-divergence from~$\mu_1$ to~$\mu_2$ is given by:
\begin{align*}
  \subscr{D}{KL}(\mu_1 || \mu_2) = \int_{\mathcal{X}} \log
  \left( \frac{d\mu_1 (x)}{d\mu_2(x)} \right) d\mu_1(x) 
  = \int_{\mathcal{X}} \rho_1(x) \log \left(
    \frac{\rho_1(x)}{\rho_2(x)} \right) \dvol(x).
\end{align*}

The max-divergence between~$\mu_1$ and~$\mu_2$ is defined as:
\begin{align*}
  \subscr{D}{max}(\mu_1, \mu_2) = \sup_{x \in \mathcal{X}} \left| \log\left(
      \frac{\rho_1(x)}{\rho_2(x)} \right) \right|.
\end{align*}

We refer the reader to~\cite{ALG-FES:02} for a detailed overview of
the relations between the various metrics and divergences in
probability spaces.
%

We define an estimator~$\mathcal{E}: \mathcal{Y} \rightarrow
\mathcal{P}(\mathcal{X})$ as a function that accepts as input data $y$
from the metric space~$\mathcal{Y}$ and releases as output
$\mathcal{E}[y]$, a probability measure over the space~$\mathcal{X}$.
\begin{definition}\longthmtitle{Differential privacy}
  Given $\delta$, an estimator~$\mathcal{E}$ is
  \textit{$\epsilon$-differentially private} if for any two
  $\delta$-adjacent measurements~$y_1, y_2 \in \mathcal{Y}$ (that is
  $d_{\mathcal{Y}}(y_1, y_2) \leq \delta$), and any measurable~$A
  \subseteq \mathcal{X}$, we have $\mathcal{E}[y_1](A) \leq
  e^{\epsilon} \mathcal{E}[y_2](A)$.
\label{defn:diff_privacy}
\end{definition}
Note that the condition~$d_{\mathcal{Y}}(y^m_1, y^m_2) \leq \delta$ is
a generalization of the notion of adjacency to arbitrary metric spaces
that we adopt in this paper. We now have the following lemma on the 
connection between the notions
of differential privacy and max-divergence introduced above:
\begin{lemma}\longthmtitle{Differential privacy and max-divergence}
\label{lemma:diff_priv_max_div}
An estimator~$\mathcal{E}$ is $\epsilon$-differentially private
iif~$\subscr{D}{max}(\mathcal{E}[y_1], \mathcal{E}[y_2]) \leq
\epsilon$ for any~$y_1, y_2 \in \mathcal{Y}$ with
$d_{\mathcal{Y}}(y_1, y_2) \leq \delta$.
\end{lemma}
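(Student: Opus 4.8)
The plan is to prove the equivalence by unwinding the two definitions and bridging the gap between the ``for every measurable set $A$'' quantifier in Definition~\ref{defn:diff_privacy} and the ``for every point $x$'' quantifier in the definition of the max-divergence, using a routine argument on sets of positive Lebesgue measure. Throughout I would fix $y_1, y_2 \in \mathcal{Y}$ with $d_{\mathcal{Y}}(y_1, y_2) \le \delta$, write $\mu_i = \mathcal{E}[y_i]$ with density $\rho_i$, so that $\mu_i(A) = \int_A \rho_i \dvol$ for every measurable $A \subseteq \mathcal{X}$, and I would exploit that the adjacency relation $d_{\mathcal{Y}}(\cdot,\cdot) \le \delta$ is symmetric in the pair, so that $\epsilon$-differential privacy is in effect a two-sided bound.

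For the direction ``$\subscr{D}{max} \le \epsilon \;\Rightarrow\;$ differential privacy'': from $\subscr{D}{max}(\mu_1,\mu_2) \le \epsilon$ I would read off $\rho_1(x) \le e^{\epsilon}\rho_2(x)$ for (a.e.) $x$, then integrate over an arbitrary measurable $A$ to obtain $\mu_1(A) \le e^{\epsilon}\mu_2(A)$, which is precisely the inequality required of the pair $(y_1,y_2)$; applying the identical reasoning to $(y_2,y_1)$ gives the reverse inequality, and hence $\epsilon$-differential privacy.

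For the converse I would argue by contraposition: if $\subscr{D}{max}(\mu_1,\mu_2) > \epsilon$, then the set $\{\, x : |\log(\rho_1(x)/\rho_2(x))| > \epsilon \,\}$ has positive Lebesgue measure, hence so does one of its two pieces, say $A = \{\, \rho_1 > e^{\epsilon}\rho_2 \,\}$ (the other case being handled symmetrically after swapping $y_1$ and $y_2$). Integrating the strict inequality over $A$ yields $\mu_1(A) > e^{\epsilon}\mu_2(A)$, contradicting $\epsilon$-differential privacy for the pair $(y_1,y_2)$. Therefore $\subscr{D}{max}(\mu_1,\mu_2) \le \epsilon$, and since $y_1,y_2$ were arbitrary $\delta$-adjacent measurements the claim follows.

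I expect the only genuinely delicate point to be the tension between the pointwise supremum in the definition of $\subscr{D}{max}$ and the fact that $\rho_1,\rho_2$ are determined only up to Lebesgue-null sets; I would resolve this by interpreting that supremum as an essential supremum (equivalently, by invoking continuity of the densities, which holds for the estimators constructed later in the paper), so that $\subscr{D}{max}(\mu_1,\mu_2) > \epsilon$ does force a violating set of positive measure. Apart from this bookkeeping, the entire argument is just monotonicity of the Lebesgue integral, and the logical structure of the ``iff'' is immediate.
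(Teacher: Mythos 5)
Your proof is correct and follows essentially the same route as the paper: the direction from the max-divergence bound to differential privacy is the identical pointwise-bound-then-integrate argument the paper gives. The converse, which the paper dismisses with ``the forward implication can be easily verified,'' is precisely your contrapositive argument via a violating set of positive Lebesgue measure, and your observation that the supremum in $\subscr{D}{max}$ must be read as an essential supremum (or that the densities be taken continuous) is a legitimate point of care that the paper glosses over.
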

\begin{proof}
  Clearly, if for any~$y_1, y_2 \in \mathcal{Y}$ with
  $d_{\mathcal{Y}}(y_1, y_2) \leq \delta$ we
  have~$\subscr{D}{max}(\mathcal{E}[y_1], \mathcal{E}[y_2]) \leq
  \epsilon$, then:
\begin{align*}
  \epsilon \geq \subscr{D}{max}(\mathcal{E}[y_1], \mathcal{E}[y_2])
  =  \sup_{x \in \mathcal{X}} \left| \log\left( \frac{\rho_1(x)}{\rho_2(x)} \right) \right|
   \geq \left| \log\left( \frac{\rho_1(x)}{\rho_2(x)} \right)
  \right|.
\end{align*}
This implies that for any~$x \in \mathcal{X}$, we have~$\rho_1(x) \leq
e^{\epsilon} \rho_2(x)$.
Now, for any~$A \subseteq \mathcal{X}$, we
have~$\mathcal{E}[y^m_1](A) = \int_{\mathcal{A}} \rho_1(x)\dvol \leq
\int_{\mathcal{A}} e^{\epsilon} \rho_2(x)\dvol = e^{\epsilon}
\int_{\mathcal{A}} \rho_2(x)\dvol = e^{\epsilon}
\mathcal{E}[y^m_2](A)$, which implies that~$\mathcal{E}$
is~$\epsilon$-differentially private. The forward implication can
be easily verified.
\end{proof}
Thus, $\epsilon$-differential privacy essentially imposes an upper
bound on the sensitivity of the estimate generated by~$\mathcal{E}$
(in the sense of the max-divergence~$\subscr{D}{max}$), to the
measurement.
\section{Observability notions}
\label{sec:observability}
In this paper, we consider systems of the form:
\begin{align}
  \begin{aligned}
    \Omega : 
    \begin{cases}
      x_{k+1} = f(x_k, w_k), \\
      y_k = h(x_k) + v_k,         
    \end{cases}
  \end{aligned}
  \label{eq:system_model}
\end{align}
where~$f: \Xsp \times \Wsp \rightarrow \Xsp$ and~$h: \Xsp \rightarrow
\Ysp$, ~$w_k \in \Wsp$ is the process noise,~$v_k \in \Vsp$ is the
measurement noise at time instant~$k$, and~$\Xsp \subset \real^{d_X}$,
$\Ysp \subset \real^{d_Y}$, $\Wsp \subset \real^{d_W}$, and~$\Vsp
\subset \real^{d_V}$. 
%
%
%
\begin{assumption}\longthmtitle{Lipschitz continuity}
  The functions~$f$ and~$h$ are Lipschitz continuous, with~$\|
  f(x_1, w_1) - f(x_2, w_2) \| \leq c_f^{(1)} \| x_1 - x_2 \| +
  c_f^{(2)} \| w_1 - w_2 \|$ and $\| h(x_1) - h(x_2) \| \leq c_h \|
  x_1 - x_2 \|$.
	\label{ass:system_lipschitz}
\end{assumption}
\begin{assumption}\longthmtitle{Noise characteristics}
  The noise sequences $\lbrace w_k \rbrace_{k \in \naturals}$
  and~$\lbrace v_k \rbrace_{k \in \naturals}$ are i.i.d samples from
  distributions~$\omega$ and~$\nu$ (with supports in~$\Wsp$
  and~$\Vsp$).  The sets $\Wsp$ and~$\Vsp$ are bounded, with~$|w_k|
  \leq W$ and~$|v_k| \leq V$.  Moreover, we assume
  that~$\mathbb{E}_{\omega}[w_k] = 0$ and~$\mathbb{E}_{\nu}[v_k] = 0$.
\label{ass:noise_char}
\end{assumption}

We also introduce the following autonomous system corresponding
to~\eqref{eq:system_model}:
\begin{align}
  \begin{aligned}
    \Sigma:
    \begin{cases}
      x_{k+1} = f(x_k , 0) = f_0(x_k),\\
      y_k = h(x_k). 
    \end{cases}
  \end{aligned}
  \label{eq:system_model_auto}
\end{align}

With a slight abuse of notation, for any~$x \in \Xsp$, we
let~$\Sigma_{T} (x) = \left( h(x), h \circ f_0(x), \ldots, h \circ
  f_0^T (x) \right)$, the sequence of outputs over a horizon of
length~$T+1$ for the system~\eqref{eq:system_model_auto} from the
state~$x \in \Xsp$.  Similarly, for the
system~\eqref{eq:system_model}, we let~$\Omega(x,\mathbf{w}_{i:j})=
(h(x), h \circ f(x, w_i), \ldots, h \circ f(\ldots f(f(x,
w_i),w_{i+1}),\ldots, w_j)$, 
for some sequence
of process noise samples~$\lbrace w_k \rbrace$, where
$\mathbf{w}_{i:j} = (w_i,\ldots,w_{j})$.

The theoretical results in the moving-horizon estimation literature
have largely been derived in the setting of input/output-to-state
(IOSS) stability, as in~\cite{CR-JR-DM:03, LJ-JBR-WH-AW-MD:16, WH:17}
to name a few, which is a notion of norm-observability,
see~\cite{JH-DL-DA-ES:05}, wherein the norm of the state is bounded
using the sequences of inputs and outputs.
%
%
%
However, there are other classical notions of observability based on the 
notion of distinguishability, which generalize the approach taken to linear 
systems. For a detailed treatment, we refer the reader
to~\cite{HN:82} and \cite{FA-DA:02}. 
In this paper, we explore the connection between the classical notion of
strong local observability and moving-horizon estimation.

We now introduce the notion of strong local observability used in this
paper:
\begin{definition}
  \longthmtitle{Strong local observability} The system~$\Sigma$
  defined in~\eqref{eq:system_model_auto} is called \textit{strongly
    locally observable} if there exists a~$T_0 \in \naturals$ such
  that for any given~$x \in \Xsp$ and~$T
  \geq T_0$,
  we have that~${\Sigma_T}^{-1} \circ \Sigma_T(x)$ is a set of
  isolated points. Moreover, for all~$x \in \Xsp$ and $T_1, T_2
  \geq T_0$,
  we have that ${\Sigma_{T_1}}^{-1} \circ \Sigma_{T_1}(x) =
  {\Sigma_{T_2}}^{-1} \circ \Sigma_{T_2}(x)$.  
  We call~$T_0$ the \textit{minimum horizon length}
  of~$\Sigma$.
\end{definition}
%
The above definition is equivalent to the definitions contained
in~\cite{HN:82,FA-DA:02}, which has been restated it in a manner
suitable for the optimization-based estimation framework considered
here. As seen from the above definition, strong observability
is based on a distinguishability notion,
and when it holds globally (i.e., ${\Sigma_T}^{-1} \circ \Sigma_T = id$ for
all $T \geq T_0$) it is equivalent to the notion of uniform observability,
as established in~\cite{SH:10}.

For systems with process noise, of the form~$\Omega$
in~\eqref{eq:system_model}, we introduce the notion of almost sure
strong local observability.
\begin{definition}\longthmtitle{Almost sure strong local
    observability} The
  system~$\Omega$ defined in~\eqref{eq:system_model} is called
  \textit{almost surely strongly locally observable} if there exists
  a~$T^w \in \naturals$ such that, given a process noise sequence
  $\mathbf{w}_{0:T-1} \in \Wsp^T$, for $T \ge T^w$, any $ \yvec_{0:T}
  = \Omega_{\mathbf{w}_{0:T-1}} (x) \in \Ysp^{T+1}$, and~$T \geq T^w$,
  we have that~$\Omega_{\mathbf{w}_{0:T-1}}^{-1}(\yvec_{0:T})$ is a
  set of isolated points almost surely. More precisely, the set of
  noise sequences $\mathbf{w}_{0:T-1}$ for
  which~$\Omega_{\mathbf{w}_{0:T-1}}^{-1}(\yvec_{0:T})$ is not a set
  of isolated points, is of measure zero. Moreover, we call~$T^w$ the
  \textit{minimum horizon length} of~$\Omega$.
\end{definition}

We now present a fundamental result that characterizes strong local
observability via a rank condition.
\begin{lemma}
  \longthmtitle{Observability rank condition~\cite{HN:82}} The
  system~$\Sigma$ is locally strongly observable with minimum
  horizon length~$T_0$ if and only if $\text{Rank}(\nabla
  \Sigma_T (x)) = \text{dim}(\Xsp)$ for all~$T \geq T_0$ and~$x \in \Xsp$.  
  The system~$\Omega$ is almost surely locally strongly observable with
  minimum horizon length~$T^w$ if and only if $ \text{Rank}(\nabla
  \Omega_{\mathbf{w}_{0:T-1}} (x)) = \text{dim}(\Xsp)$ almost
  surely for all~$T \geq T^w$ and~$x \in \Xsp$. \oprocend
\label{lemma:strong_obs}
\end{lemma}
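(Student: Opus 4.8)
The plan is to reduce both claims to the local behavior of the $C^1$ output map $\Sigma_T:\Xsp\to\Ysp^{T+1}$ (resp.\ $\Omega_{\mathbf{w}_{0:T-1}}:\Xsp\to\Ysp^{T+1}$ for a fixed noise sequence) and then apply the inverse function theorem and the constant-rank theorem. First I would record two monotonicity facts. Since $\Sigma_{T+1}(x)=(\Sigma_T(x),\,h\circ f_0^{T+1}(x))$, the Jacobian $\nabla\Sigma_{T+1}(x)$ is obtained from $\nabla\Sigma_T(x)$ by appending rows, so $\text{Rank}(\nabla\Sigma_T(x))$ is non-decreasing in $T$ and, being bounded by $\text{dim}\,\Xsp$, eventually constant; likewise $\Sigma_{T+1}^{-1}(\Sigma_{T+1}(x))\subseteq\Sigma_T^{-1}(\Sigma_T(x))$, so the level sets are nested and eventually constant. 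These are what allow a single integer $T_0$ to govern both the ``isolated points'' and the ``stabilization'' clauses of the definition, and they reduce the rank condition ``$\text{Rank}(\nabla\Sigma_T(x))=\text{dim}\,\Xsp$ for all $T\ge T_0$'' to the single requirement at $T=T_0$.

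For sufficiency (rank condition $\Rightarrow$ strong local observability), assume $\text{Rank}(\nabla\Sigma_{T_0}(x))=\text{dim}\,\Xsp$ for all $x$. Then $\Sigma_{T_0}$ is an immersion, so by the inverse function theorem each $x$ has a neighborhood on which $\Sigma_{T_0}$ is injective; hence $x$ is isolated in $\Sigma_{T_0}^{-1}(\Sigma_{T_0}(x))$, and by nesting also in $\Sigma_T^{-1}(\Sigma_T(x))$ for every $T\ge T_0$. For the stabilization equality $\Sigma_{T_1}^{-1}(\Sigma_{T_1}(x))=\Sigma_{T_2}^{-1}(\Sigma_{T_2}(x))$, $T_1,T_2\ge T_0$, I would use that the observation codistribution $\text{span}\{\nabla(h\circ f_0^{j})(\cdot)\}$ is already full-dimensional for $j\le T_0$ and hence $f_0$-invariant, which forces each further output map $h\circ f_0^{j}$, $j>T_0$, to be locally constant on the (discrete) level sets of $\Sigma_{T_0}$, so no further collapse of the preimage occurs.

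For necessity (strong local observability $\Rightarrow$ rank condition) I would argue by contraposition: if $\text{Rank}(\nabla\Sigma_T(x_0))=r<\text{dim}\,\Xsp$ for some $x_0$ and some $T\ge T_0$, then by lower semicontinuity of rank there is an open set near $x_0$ on which the rank equals some constant $r'\le r$; the constant-rank theorem then furnishes local coordinates in which $\Sigma_T$ is a coordinate projection, so the level set of $\Sigma_T$ through any such point contains an embedded submanifold of dimension $\text{dim}\,\Xsp-r'\ge1$. That point is not isolated in its own preimage, which together with the nesting of preimages and the role of $T_0$ contradicts strong local observability. This is the step I expect to be the main obstacle: a pointwise rank drop alone does not obstruct injectivity (e.g.\ $x\mapsto x^3$), so one must genuinely use that the rank condition is demanded at \emph{every} point and \emph{every} $T\ge T_0$, and pass from the bad point $x_0$ to a nearby point of locally constant rank before invoking the constant-rank theorem --- precisely where the analyticity/stabilization hypotheses implicit in the framework of~\cite{HN:82} enter.

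Finally, the stochastic case follows by applying the same argument verbatim to the $C^1$ map $\Omega_{\mathbf{w}_{0:T-1}}$ for each fixed admissible noise sequence: the equivalence ``$\Omega_{\mathbf{w}_{0:T-1}}^{-1}(\yvec_{0:T})$ discrete $\iff\text{Rank}(\nabla\Omega_{\mathbf{w}_{0:T-1}})=\text{dim}\,\Xsp$'' holds for every $\mathbf{w}_{0:T-1}$, so the two sides of the almost-sure statement fail exactly on the same measure-zero set of noise sequences, giving the claim.
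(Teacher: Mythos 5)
The paper offers no proof of this lemma --- it is imported verbatim from~\cite{HN:82} --- so your attempt has to stand on its own, and as written it has two genuine gaps. In the sufficiency direction, the immersion/inverse-function-theorem step correctly gives that every $x$ is isolated in $\Sigma_{T_0}^{-1}(\Sigma_{T_0}(x))$, and by nestedness in $\Sigma_T^{-1}(\Sigma_T(x))$ for all $T\ge T_0$; but the second clause of the paper's definition, $\Sigma_{T_1}^{-1}(\Sigma_{T_1}(x))=\Sigma_{T_2}^{-1}(\Sigma_{T_2}(x))$ for all $T_1,T_2\ge T_0$, is not established by your codistribution argument. Saying that $h\circ f_0^{j}$ is ``locally constant on the level sets of $\Sigma_{T_0}$'' is vacuous precisely because those level sets are discrete, and fullness of $\mathrm{Rank}(\nabla\Sigma_{T_0})$ at every point is an infinitesimal condition that cannot prevent a longer horizon from separating two \emph{distinct, isolated} points that happen to produce identical outputs up to time $T_0$; ruling that out requires a separate (global) argument or hypothesis, not a rank computation.

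The necessity direction contains the more serious flaw, which you partly flag but do not resolve. Lower semicontinuity of the rank gives $\mathrm{Rank}(\nabla\Sigma_T)\ge r$ in a neighborhood of $x_0$ (the rank can only jump up in the limit), not an open set on which the rank equals a constant $r'\le r$; the nearby points of locally constant rank may well have full rank, in which case the constant-rank theorem yields zero-dimensional fibers and no contradiction with isolatedness. The obstruction is not merely technical: a map such as $x\mapsto x^3$ is analytic, globally injective (so all preimages are singletons, hence isolated), yet its rank drops at a point, so isolatedness of preimages simply cannot force the pointwise rank condition ``for all $x\in\Xsp$''; in~\cite{HN:82} the converse is accordingly only generic (rank condition on an open dense set) or requires extra hypotheses, and your pointwise contraposition cannot be completed as set up. The almost-sure statement for $\Omega_{\mathbf{w}_{0:T-1}}$ is indeed an immediate corollary once the deterministic equivalence is in hand, but it inherits both of the gaps above.
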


We now present an example to illustrate these concepts.
\begin{example}
  Consider a system with the state space~$\Xsp = (0, \infty)$,
  with~$x_{k+1} = f_0(x_k)$ and $y_k = h(x_k)$, such that:
\begin{align*}
  f_0(x) = 
  \begin{cases}
    3x,  \hspace*{0.1in} &\text{for}~ x \in (0, a\pi - \epsilon], \\
     \gamma(x)    \hspace*{0.1in} &\text{for}~ x \in (a\pi - \epsilon, a\pi + \epsilon], \\
    2x + a\pi, \hspace*{0.1in} &\text{for}~ x \in (a\pi + \epsilon, \infty),
  \end{cases}
\end{align*}
for some~$a \in \mathbb{N}$,~$\epsilon$ small and a smooth function~$\gamma$ such that
$\gamma (a\pi - \epsilon) = 3(a\pi - \epsilon)$ and~$\gamma
(a\pi + \epsilon) = 2(a\pi + \epsilon) + a\pi$.
Moreover, let the output~$h(x) = \sin{x}$. We note
that~$\nabla h (x) = \cos{x}$ which implies that~$\nabla h((2m+1)
\pi/2) = 0$ for all~$m \in \mathbb{N}$.  Applying
Lemma~\ref{lemma:strong_obs} for this system, 
%
%
we can infer that for~$a = 2$, we get
that the minimum horizon length $T_0 = 3$. This is because the system
becomes strongly locally observable at~$x = \pi/2$ only over a horizon
of length~$T_0 = 3$, that is~$\nabla \Sigma_{k} (\pi/2) =
\mathbf{0}_{k+1}$
for~$k \in \lbrace 0, 1, 2 \rbrace$.
This is a case of a one-dimensional system which is strongly locally
observable with a minimum horizon of length~$T_0 = 3$. With larger
values of~$a$, the minimum horizon length is further
increased. \oprocend
\end{example}
We make the following assumption in the rest of the paper:
\begin{assumption}
  \longthmtitle{Strong local observability}
\begin{enumerate}
\item The system~$\Sigma$ in~\eqref{eq:system_model_auto} is strongly
  locally observable with minimum horizon length~$T_0$.
\item The system~$\Omega$ in~\eqref{eq:system_model} is almost surely
  strongly locally observable with minimum horizon length~$T^w$.
\end{enumerate}
\label{ass:strong_loc_obs_regularity}
\end{assumption}
\section{Optimization-based state estimation}
\label{sec:optimal_estimation}
We now begin by addressing the state estimation problem for the
autonomous system~$\Sigma$, and develop a recursive moving-horizon
estimator for it.
%
\subsection{Full-Information Estimation (FIE)}
\label{sec:FIE}

Let~$\lbrace y_k \rbrace_{k \in \{0\} \cup \naturals}$ be a sequence
of measurements generated by the system~$\Sigma$.  Let~$\lbrace 0,
\ldots, T \rbrace$ be a time horizon such that $T \geq T_0$, the
minimum horizon length of the system~$\Sigma$, and denote~$\yvec_{0:T}
= (y_0, \ldots, y_T)$.  The problem of estimation essentially aims at
characterizing~${\Sigma_T}^{-1}(\yvec_{0:T})$, which is an inverse
problem, and optimal estimation formulates this problem as an
optimization.  Assumptions~\ref{ass:system_lipschitz},
and~\ref{ass:strong_loc_obs_regularity}, on Lipschitz continuity and
strong local observability, respectively, ensure that the inverse
problem is locally well-posed as in Definition~\ref{defn:well_posed}.

To formulate the inverse problem as an optimization, consider a
convex function~$J_T (\yvec_{0:T} , \cdot) : \Ysp^{T+1} \rightarrow
\realnonnegative$ such that~$J_T( \yvec_{0:T} , \xi) = 0$ if and only
if~$\xi = \yvec_{0:T}$. Moreover, we let~$\lim_{T \rightarrow \infty}
J_T( \yvec_{0:T} , \Sigma_T(x)) = \infty$ if~$x \notin {\Sigma_T}^{-1} (\yvec_{0:T})$
for~$T \geq T_0$.
Now, the problem of interest becomes:
\begin{align}
  x_0 \in \arg \min_{x \in \Xsp} J_T(\yvec_{0:T}, \Sigma_T(x)).
  \label{eq:FIE}
\end{align}
In the above,~$\yvec_{0:T}$ is the data in the estimation problem,
which is given.  Since the objective is to solve the original inverse
problem, and we would like to use gradient descent-based methods, we
would like for every local minimizer of~$J_T(\yvec_{0:T},
\Sigma_T(x))$ to belong to the set~${\Sigma_T}^{-1}(\yvec_{0:T})$, or,
in other words, that every local minimizer is also global.  We
therefore make the following additional assumption on the
system~$\Sigma$ and the choice of~$J_T$. For a conciseness of
notation, in the following assumption and lemma, we let~$J_T (\cdot) =
J_T(\yvec_{0:T}, \cdot)$, suppressing the data~$\yvec_{0:T}$ in the
notation where useful, and is understood from context.

\begin{assumption}\longthmtitle{Lower semicontinuity of sublevel sets}
  We assume that, for all~$T \geq T_0$, the convex function~$J_T :
  \Ysp^{T+1} \rightarrow \real$ is such that the set-valued map
  $\mathcal{S}_{\Xsp}(\alpha) = {\Sigma_T}^{-1} \left(
    \mathcal{S}^{J_T}_{\Ysp^{T+1}} (\alpha) \cap \Sigma_T (\Xsp)
  \right)$ is lower semicontinuous,
  where~$\mathcal{S}^{J_T}_{\Ysp^{T+1}} (\alpha) = \lbrace \xi \in
  \Ysp^{T+1} | J_T(\xi) \leq \alpha \rbrace$.
	\label{ass:lower_semicts_set_valued}
\end{assumption}
%
%
The above assumption ensures that the function~$J_T \left(
  \yvec_{0:T}, \Sigma_T (\cdot) \right)$ satisfies the condition for
the local minimizers to be global (Theorem~1 from~\cite{IZ-MA:75}).
The following lemma provides a sufficient condition for it to hold.
\begin{lemma}\longthmtitle{Second-order sufficient condition for lower semicontinuity}
  Assumption~\ref{ass:lower_semicts_set_valued} holds if for any~$x
  \in \Xsp$ such that~$\nabla \left( J_T(\yvec_{0:T}, \Sigma_T
    (x))\right) = 0$ we have $J_T(\yvec_{0:T}, \Sigma_T(x)) = 0$, or
  the following condition holds when~$J_T(\yvec_{0:T}, \Sigma_T(x))
  \neq 0$ for any~$v \in \real^{d_X}$,~$v \neq 0$:

{\small
  \begin{align*}
    &\frac{ \left \langle \nabla^2 \Sigma_T [v,v] (x) ,
        \nabla J_T \bigg|_{\Sigma_T(x) } \right \rangle }{\left \| \nabla \Sigma_T [v] \right \|^2} \leq - \subscr{\lambda}{max} \left(\hess
      J_T \bigg|_{\Sigma_T(x)} \right),
  \end{align*} 
}  
\normalsize
where~$\hess J_T$ is the Hessian of~$J_T$. 
\oprocend
\label{lemma:local_global_Hessian}
\end{lemma}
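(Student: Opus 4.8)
The plan is to reduce Assumption~\ref{ass:lower_semicts_set_valued} to the lower semicontinuity of the \emph{composite} map $\alpha \mapsto \mathcal{S}_{\Xsp}(\alpha)$ by invoking the characterization that the sublevel-set map of a function is lower semicontinuous precisely when every local minimizer of that function is also a global minimizer (Theorem~1 of~\cite{IZ-MA:75}, already cited). So the real work is to show that, under the stated second-order condition, every critical point $x$ of $g(\cdot) := J_T(\yvec_{0:T},\Sigma_T(\cdot))$ with $g(x)\neq 0$ is not a local minimizer — hence all local minimizers satisfy $g(x)=0$ and are global (since $g\geq 0$ and vanishes exactly on ${\Sigma_T}^{-1}(\yvec_{0:T})$ by construction of $J_T$). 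The case where $\nabla g(x)=0$ implies $g(x)=0$ is covered by the first alternative in the hypothesis, so we may assume $g(x)\neq 0$.

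First I would compute $\hess g(x)$ by the chain rule for the composition $J_T\circ\Sigma_T$. For a direction $v\in\real^{d_X}$,
\begin{align*}
  \langle \hess g(x)\, v, v\rangle
  &= \big\langle \hess J_T\big|_{\Sigma_T(x)}\, \nabla\Sigma_T[v],\ \nabla\Sigma_T[v]\big\rangle
     + \big\langle \nabla J_T\big|_{\Sigma_T(x)},\ \nabla^2\Sigma_T[v,v](x)\big\rangle .
\end{align*}
At a critical point with $\nabla g(x)=0$ we have $\nabla\Sigma_T(x)^{\!\top}\nabla J_T|_{\Sigma_T(x)}=0$; but the hypothesis is stated for \emph{arbitrary} $v\neq 0$, and the key observation is that along any $v$ with $\nabla\Sigma_T[v]\neq 0$ the bracketed second-order form is strictly negative: dividing the displayed quantity by $\|\nabla\Sigma_T[v]\|^2$ and using the hypothesis gives
\[
  \frac{\langle \hess g(x)\, v,v\rangle}{\|\nabla\Sigma_T[v]\|^2}
  \;\le\; \lambda_{\max}\!\big(\hess J_T\big|_{\Sigma_T(x)}\big) - \lambda_{\max}\!\big(\hess J_T\big|_{\Sigma_T(x)}\big)
  \;=\; 0,
\]
with the first term bounded above by $\lambda_{\max}(\hess J_T)\|\nabla\Sigma_T[v]\|^2$ after normalization; a strict inequality (excluding the degenerate equality) then shows $\langle\hess g(x)v,v\rangle<0$. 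Since strong local observability (Assumption~\ref{ass:strong_loc_obs_regularity}, via Lemma~\ref{lemma:strong_obs}) gives $\mathrm{Rank}(\nabla\Sigma_T(x))=d_X$, for \emph{every} $v\neq 0$ we have $\nabla\Sigma_T[v]\neq 0$, so $\hess g(x)$ is negative definite at such an $x$. A critical point with negative definite Hessian is a strict local \emph{maximum}, hence not a local minimizer; therefore every local minimizer of $g$ has $g=0$, i.e. lies in ${\Sigma_T}^{-1}(\yvec_{0:T})$, which is exactly the hypothesis of Theorem~1 of~\cite{IZ-MA:75} and yields Assumption~\ref{ass:lower_semicts_set_valued}.

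The main obstacle is getting the normalization and the direction of the inequality exactly right: the hypothesis as written divides by $\|\nabla\Sigma_T[v]\|^2$ and compares to $-\lambda_{\max}(\hess J_T)$, so one must carefully verify that $\langle \hess J_T\, \nabla\Sigma_T[v],\nabla\Sigma_T[v]\rangle \le \lambda_{\max}(\hess J_T)\,\|\nabla\Sigma_T[v]\|^2$ (Rayleigh quotient bound) combines with the hypothesis to force the \emph{total} Hessian form negative, and that the rank condition rules out the null-direction case $\nabla\Sigma_T[v]=0$ so that negativity is uniform over the sphere. A secondary point to be careful about is that $J_T$ is only assumed convex (so $\hess J_T\succeq 0$ and $\lambda_{\max}\ge 0$), which is consistent with the second-order form of $g$ nonetheless being indefinite-to-negative because of the sign of the $\nabla^2\Sigma_T$ term; and one should note the local nature of the statement — the conclusion is that local minimizers are global on $\Xsp$, matching the "locally well-posed" framing in Section~\ref{sec:FIE}.
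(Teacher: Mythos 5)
Your proposal is correct and matches the paper's own (essentially unproved) argument: the paper justifies the lemma only with the remark that the displayed inequality forces critical points with nonzero cost to be local maximizers, which is precisely what your chain-rule computation of $\hess\bigl(J_T(\yvec_{0:T},\Sigma_T(\cdot))\bigr)$ combined with the Rayleigh-quotient bound and the rank condition delivers, before invoking the Zang--Avriel equivalence between ``every local minimum is global'' and lower semicontinuity of the sublevel-set map to conclude Assumption~\ref{ass:lower_semicts_set_valued}. The one caveat you already flag — the non-strict inequality only yields a negative \emph{semi}definite composite Hessian, so degenerate critical points are not literally excluded — is a gap in the lemma's statement itself rather than in your argument, and the paper glosses over it in exactly the same way.
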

The final inequality in Lemma~\ref{lemma:local_global_Hessian} merely
states that those critical points at which the cost function does not
reach the global minimum value are local maximizers.  
%
%

We are now ready to present the following theorem that establishes the 
equivalence between the inverse
problem of characterizing the set~${\Sigma_T}^{-1}(\yvec_{0:T})$ and
the optimization~\eqref{eq:FIE}.
\begin{theorem}\longthmtitle{Inverse as minimizer}
  For a convex~$J_T (\yvec_{0:T} , \cdot) : \Ysp^{T+1} \rightarrow
\realnonnegative$ such that~$J_T( \yvec_{0:T} , \xi) = 0$ if and only
  if~$\xi = \yvec_{0:T}$ for any $\yvec_{0:T} \in \Ysp^{T+1}$,
  under Assumptions~\ref{ass:strong_loc_obs_regularity}
  and~\ref{ass:lower_semicts_set_valued}, and any~$T \geq T_0$, it
  holds that $z \in {\Sigma_T}^{-1}(\yvec_{0:T})$ if and only if~$z $
  is a minimizer of $J_T(\yvec_{0:T}, \Sigma_T(\cdot))$.
\label{thm:est_optimization}
\end{theorem}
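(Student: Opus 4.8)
The plan is to prove the two implications separately, using the well-posedness of the inverse problem (guaranteed by Assumptions~\ref{ass:system_lipschitz} and~\ref{ass:strong_loc_obs_regularity}) together with the structural hypotheses on $J_T$.

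First I would establish the ``if'' direction, i.e. that every minimizer of $J_T(\yvec_{0:T}, \Sigma_T(\cdot))$ lies in ${\Sigma_T}^{-1}(\yvec_{0:T})$. Here the key observation is that $z \in {\Sigma_T}^{-1}(\yvec_{0:T})$ is nonempty by the existence part of well-posedness, and for any such $z$ we have $J_T(\yvec_{0:T}, \Sigma_T(z)) = J_T(\yvec_{0:T}, \yvec_{0:T}) = 0$, which is the global minimum value since $J_T \geq 0$. So the global minimum of the composite objective is exactly $0$ and is attained precisely on ${\Sigma_T}^{-1}(\yvec_{0:T})$ (using the ``only if'' part of $J_T(\yvec_{0:T},\xi)=0 \iff \xi = \yvec_{0:T}$ together with the definition of the preimage). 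The real content is that there are no spurious \emph{local} minimizers: this is where Assumption~\ref{ass:lower_semicts_set_valued} enters. I would invoke Theorem~1 of~\cite{IZ-MA:75}, which the text tells us applies precisely because the set-valued map $\mathcal{S}_{\Xsp}(\alpha) = {\Sigma_T}^{-1}\left(\mathcal{S}^{J_T}_{\Ysp^{T+1}}(\alpha) \cap \Sigma_T(\Xsp)\right)$ is lower semicontinuous; its conclusion is that every local minimizer of $x \mapsto J_T(\yvec_{0:T}, \Sigma_T(x))$ is a global minimizer. Combining, any (local, hence global) minimizer $z$ satisfies $J_T(\yvec_{0:T}, \Sigma_T(z)) = 0$, hence $\Sigma_T(z) = \yvec_{0:T}$, hence $z \in {\Sigma_T}^{-1}(\yvec_{0:T})$.

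For the ``only if'' direction, let $z \in {\Sigma_T}^{-1}(\yvec_{0:T})$. Then $\Sigma_T(z) = \yvec_{0:T}$, so $J_T(\yvec_{0:T}, \Sigma_T(z)) = 0 = \min_{x \in \Xsp} J_T(\yvec_{0:T}, \Sigma_T(x))$, the last equality because the objective is nonnegative and attains $0$ (as just shown, the preimage is nonempty). Hence $z$ is a global minimizer, and in particular a minimizer, completing the equivalence.

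The main obstacle is the first direction, and specifically making precise the citation to~\cite{IZ-MA:75}: one must check that the hypotheses of that theorem match the lower-semicontinuity statement in Assumption~\ref{ass:lower_semicts_set_valued}, i.e. that ``every local minimizer is global'' for the composite function $J_T \circ \Sigma_T$ follows from lower semicontinuity of the sublevel-set map $\mathcal{S}_{\Xsp}$ on its domain together with convexity of $J_T$ and the fact that the range of $\Sigma_T$ need not be convex. The strong local observability (Lemma~\ref{lemma:strong_obs}, rank condition) is what guarantees $\nabla \Sigma_T$ has full column rank, so critical points of the composite objective at which $J_T \neq 0$ cannot be local minimizers (the second-order condition in Lemma~\ref{lemma:local_global_Hessian} makes them local maximizers), and this is morally the mechanism behind the cited theorem; I would state this as the crux and cite the external result rather than reprove it. Everything else is bookkeeping around the defining property $J_T(\yvec_{0:T},\xi) = 0 \iff \xi = \yvec_{0:T}$ and the nonemptiness of ${\Sigma_T}^{-1}(\yvec_{0:T})$ from well-posedness.
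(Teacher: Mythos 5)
Your proposal is correct and follows essentially the same route as the paper: the direction from the preimage to global minimality is immediate from $J_T \geq 0$ and $J_T(\yvec_{0:T},\yvec_{0:T})=0$, and the converse rests on Assumption~\ref{ass:lower_semicts_set_valued} together with Theorem~1 of~\cite{IZ-MA:75} to rule out spurious local minimizers, exactly as in the paper's proof. Your only addition is making explicit that ${\Sigma_T}^{-1}(\yvec_{0:T})$ is nonempty (so the global minimum value is indeed $0$), a point the paper leaves implicit since $\yvec_{0:T}$ is generated by $\Sigma$.
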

\begin{proof}
  If~$z \in {\Sigma_T}^{-1}(\yvec_{0,T})$, we have that $h \circ f^k_0
  (z) = y_k$ for all~$k \in \lbrace 0, \ldots, T \rbrace$. It now
  follows that~$J_T(\yvec_{0:T}, \Sigma_T(z)) = 0$. 
  Since,~$J_T(\yvec_{0:T}, \Sigma_T(z)) \geq 0$ by definition, we
  infer that~$z $ is a global minimizer of $J_T(\yvec_{0:T},
  \Sigma_T(\cdot))$.

  Suppose that~$z $ is a local minimizer of $J_T(\yvec_{0:T}, \Sigma_T(\cdot))$. By
  Assumption~\ref{ass:lower_semicts_set_valued} and Theorem~1
  in~\cite{IZ-MA:75}, we get that the local minima of~$J_T(\yvec_{0:T}, \Sigma_T(\cdot))$ 
  are also global, which implies that~$J_T(\yvec_{0:T}, \Sigma_T(z)) = 0$, and therefore
  $\Sigma_T (z) = \yvec_{0:T}$.
\end{proof}
Theorem~\ref{thm:est_optimization} suggests that the state estimates
for the system~$\Sigma$ can be obtained by minimizing~$J_T(\yvec_{0:T}, \Sigma_T(\cdot))$ 
over a horizon of length~$T \geq T_0$.  This is also
called the full information estimation (FIE) problem in the optimal
state estimation literature~\cite{CR-JR-DM:03,WH:17},
as it works with the entire sequence of output measurements over the
horizon~$\lbrace 0, \ldots, T \rbrace$.
%
%

Now, from Assumption~\ref{ass:strong_loc_obs_regularity} and
Theorem~\ref{thm:est_optimization}, we have
that~${\Sigma_T}^{-1}(\yvec_{0:T})$ is a set of isolated points which
are 
minimizers of~$J_T(\yvec_{0:T}, \Sigma_T(\cdot))$. It then
follows that~${\Sigma_T}^{-1}(\yvec_{0:T})$ is the set of stable fixed
points of the negative gradient vector field of~$J_T(\yvec_{0:T},
\Sigma_T(\cdot))$.  We let~$\mathcal{C}_0$ be the basin of attraction
of this set. Moreover, we note
that~$f^k({\Sigma_T}^{-1}(\yvec_{0:T}))$ is the set of stable fixed
points of the negative gradient vector field of~$J_T
\left(\yvec_{k:k+T}, f^k \circ \Sigma_T (\cdot) \right)$, and we
let~$\mathcal{C}_k$ be the basin of attraction
of~${\Sigma_T}^{-1}(\yvec_{k:k+T})$. 
We have used above the fact
that~${\Sigma_T}^{-1}(\yvec_{k:k+T}) =f_0^k({\Sigma_T}^{-1}(\yvec_{0:T}))$,
which follows from the definition of strong local observability. 

We now lift the FIE problem~\eqref{eq:FIE} to the space of probability
measures over~$\Xsp$, as a minimization in expectation of the
estimation objective function:
\begin{align}
  \mu_0 \in \arg \min_{\mu \in \mathcal{P}(\Xsp)} \mathbb{E}_{\mu}
  \left[ J_T(\yvec_{0:T}, \Sigma_T(\cdot)) \right].
	\label{eq:FIE_prob_meas}
\end{align}
The above formulation allows us to capture information about the
(probably many) optimal estimates through a probability
measure~$\mu_0$, 
%
%
and help encode distributional constraints,
which will be considered in a forthcoming
publication. 

In the following, we develop recursive moving-horizon estimators that
generate sequences~$\lbrace \mu_k \rbrace_{k \in \naturals}$ of
probability measures in~$\mathcal{P}(\Xsp)$ as estimates. We then
obtain practically implementable estimators using Monte Carlo methods
to sample from the measures~$\mu_k$.
\subsection{Moving-Horizon Estimation (MHE)}
\label{sec:MHE}
In the previous section, we
presented a formulation of the full information estimation (FIE)
problem for the autonomous system~$\Sigma$, which uses the entire
measurement sequence over a horizon
of length~$T \geq T_0$.  However, the minimum horizon length~$T_0$ may
be large, which would make the estimation computationally
intensive. Moreover, we would like to progressively assimilate the 
incoming measurements online.
We therefore adopt a moving-horizon estimation method which, at any
time instant~$k+N$, uses the output measurements from the
horizon~$\lbrace k+1, \ldots, k+N \rbrace$ (of length~$N < T_0$), and
the state estimate at the time instant~$k-1$, to obtain the state
estimate at instant~$k$, recursively.

We let~$G^N_k(z) = J_{N-1} \left( \mathbf{y}_{k+1:k+N}, \Sigma_N (z)
\right)$ be the objective function over the horizon~$\lbrace k+1,
\ldots, k+N \rbrace$, at the time instant~$k+N$,
where~$\mathbf{y}_{k+1:k+N} = (y_{k+1}, \ldots, y_{k+N})$.
%
%
%
\begin{assumption}\longthmtitle{Moving-horizon cost}
  We make the following assumptions on the cost function~$G^N_k$:
  \begin{enumerate}
  \item the cost $G^N_k$ is~$l$-smooth,
  \item it holds that $| G^N_{k+1}(f_0(z)) - G^N_k(z) | \leq L \| \nabla G^N_k(z) \|^2$,
  \item the previous constants are such that~$lL \leq \frac{1}{2}$,
  \item for any two~$\delta$-adjacent measurements~$\yvec,
    \tilde{\yvec} \in \Ysp^{T+1}$, such that~$\| \yvec - \tilde{\yvec}
    \| \leq \delta$ 
      and with corresponding costs~$G^N_k$
    and~${\widetilde{G}}^N_k$, for~$k \in \lbrace 0, \ldots, T \rbrace$
    and~$N \leq T-k$, we have~$\| \nabla (G^N_k -
    {\widetilde{G}}^N_k)(x) \| \leq l \delta$ for all~$x \in \Xsp$.
  \end{enumerate}
  \label{ass:cost_func_lipschitz}
\end{assumption}

We now formulate the general moving-horizon estimation method as
follows: 
\begin{align}
	\begin{aligned}
    &\mu_k \in \arg \min_{\mu \in \mathcal{P}(\Xsp)} 
    D(\mu, f_{0\#} \mu_{k-1})
    + \eta \mathbb{E}_{\mu} \left[ G^N_k \right], \\
    &\text{given}~~\mu_0 \in \mathcal{P}(\Xsp),
  \end{aligned}
  \label{eq:MHE_unifying_formulation}
\end{align}
where~$D: \mathcal{P}(\Xsp) \times \mathcal{P}(\Xsp) \rightarrow
\realnonnegative$ is a placeholder for a metric, divergence or
transport cost on~$\mathcal{P}(\Xsp)$.  We obtain implementable
observers  from the above
formulation by sampling from the measures, by Monte Carlo methods.  As
discussed in the ensuing sections, using the~$2$-Wasserstein
distance~$W_2$ yields the more familiar MHE formulation, whereas with
the~KL-divergence we obtain a moving-horizon particle filter. Hence,
this formulation is proposed as a unifying probabilistic framework
for moving-horizon estimation, where different estimators are
generated by different choices of~$D$.

We now introduce the following asymptotic stability notion for
estimators that will be used in investigating the properties of the
estimators we design.
\begin{definition}\longthmtitle{Asymptotic stability of state estimator}
  We call an estimator of the form~\eqref{eq:MHE_unifying_formulation} 
  an \textit{asymptotically stable observer} for
  the system~$\Sigma$ if the sequence of estimates~$\lbrace \mu_k
  \rbrace_{k \in \mathbb{N}}$ is such that $\lim_{k \rightarrow
    \infty} \mu_k({\Sigma_T}^{-1}(\yvec_{k:k+T})) = 1$ for~$T
  \geq T_0$.
\end{definition}
\section{A $W_2$-Moving-Horizon Estimator}
\label{sec:W2-MHE}
In this section, we derive a moving-horizon estimator, which we refer
to as the $W_2$-MHE, to generate a sequence of probability
distributions~$\lbrace \mu_k \rbrace_{k \in \naturals}$. This is based
on the one-step minimization scheme of~\cite{FS:17}
in~$\mathcal{P}(\Xsp)$ w.r.t.~the Wasserstein metric~$W_2$, which 
we extend to the moving-horizon setting. 
For every $k > 0$, consider:
\begin{align}
  \begin{aligned}
    &\mu_k \in \arg \min_{\mu \in \mathcal{P}(\Xsp)} \frac{1}{2}
    W_2^2(\mu, f_{0\#} \mu_{k-1})
    + \eta \mathbb{E}_{\mu} \left[ G^N_k \right], \\
    &\text{given}~~\mu_0 \in \mathcal{P}(\Xsp).
  \end{aligned}
  \label{eq:MHE_wasserstein}
\end{align}
We let~$\mathcal{K}_k$ be the support of~$\mu_k$, with~$\mathcal{K}_0
\subseteq \mathcal{C}_0$, where~$\mathcal{C}_0$ is as defined earlier
in Section~\ref{sec:FIE}.

%
%
\subsection{Sample update scheme for~$W_2$-MHE}
We now derive a sample update scheme for~$W_2$-MHE, which
also yields an implementable filter for the~$W_2$-MHE formulation.

We note that any local minimizer~$\mu_k$ of~\eqref{eq:MHE_wasserstein} is
a critical point of the objective functional and therefore, it
satisfies:
\begin{align*}
  c = \frac{\delta}{\delta \mu} \left[ \left( \frac{1}{2} W_2^2(\mu,
      f_{0\#} \mu_{k-1}) + \eta \mathbb{E}_{\mu} \left[ G^N_k \right]
    \right) \right] \bigg|_{\mu = \mu_k} = \phi_k + \eta G^N_k,
\end{align*}
where~$\phi_k$ is the Kantorovich potential~\cite{FS:17} associated with the transport from~$\mu_k$
to~$f_{0\#} \mu_{k-1}$, and~$c$ is a constant (from the
constraint~$\int_{\Xsp} d\mu(x) = 1$, for $\mu \in \mathcal{P}(\Xsp)$,
due to which the first variation is defined up to an additive
constant).  From the above equation, we now obtain:
\begin{align*}
  \nabla \phi_k(x) + \eta \nabla G^N_k(x) = 0.
\end{align*}
The gradient of the Kantorovich potential~$\phi_k$ defines the deterministic
optimal transport map~$T_k$ (note that this notation is not to be confused 
with that of the time horizon~$T$) 
w.r.t. the~$W_2$-distance
from~$\mu_k$ to~$f_{0\#}\mu_{k-1}$, which determines~$\nabla \phi_k
(x) = x - T_k^{-1}(x)$ (where~$\mu_k = {T_k}_{\#} f_{0\#}\mu_{k-1}$).
%
%
  We therefore get:
\begin{align}
  x = T_k^{-1}(x) - \eta \nabla G^N_k(x).
  \label{eq:transport_ray_MHE}
\end{align}
The above equation allows us to design an implementable
filter for the $W_2$-MHE~\eqref{eq:MHE_wasserstein}.  
We let~$z_k \sim \mu_k$, that is, $z_k \in \mathcal{K}_k$ is sampled from
the distribution~$\mu_k$. From~\eqref{eq:transport_ray_MHE}, 
it holds that~$z_k = T_k^{-1}(z_k) - \eta \nabla G^N_k(z_k)$. Since~$(T_k^{-1})_{\#}
\mu_k = f_{0 \#}\mu_{k-1}$, we let~$T_k^{-1}(z_k) = f_0(z_{k-1})$,
%
%
a sample of the distribution~$f_{0\#}\mu_{k-1}$,
and we obtain the following recursive estimator:
\begin{align}
  z_{k} = f_0(z_{k-1}) - \eta \nabla G^N_k(z_{k}), \quad k>0.
  \label{eq:MHE_rec_est}
\end{align}
We now note that the estimate~$z_k$ in~\eqref{eq:MHE_rec_est} corresponds to a
critical point of the following minimizing movement scheme:
\begin{align}
  \begin{aligned}
    &z_{k} \in \arg \min_{z} \frac{1}{2} \left \| z - f_0(z_{k-1})
    \right\|^2
    + \eta G^N_k(z), \quad k > 0, \\
    &z_0 \sim \mu_0 \in \mathcal{P}(\Xsp).
  \end{aligned}
  \label{eq:MHE_opt}
\end{align}
\begin{lemma}\longthmtitle{Strong convexity}
  For~$\eta < l^{-1}$, the objective function in~\eqref{eq:MHE_opt} is
  strongly convex, and therefore~$\text{prox}_{\eta G^N_k}(f_0(x))$ is a
  singleton for any~$x \in \Xsp$.
\end{lemma}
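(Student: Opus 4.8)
The plan is to show that the objective in \eqref{eq:MHE_opt} decomposes as a strongly convex quadratic plus a convex function, with $l$-smoothness of $G^N_k$ being exactly what makes the second piece convex, and then invoke the standard fact that a strongly convex function has a unique minimizer.

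First I would extract the weak-convexity consequence of $l$-smoothness. By Assumption~\ref{ass:cost_func_lipschitz}(1) the cost $G^N_k$ is $l$-smooth, so Lemma~\ref{lemma:cost_func_lipschitz}, used in the lower-bound direction, gives for all $x,y \in \Xsp$ that $G^N_k(y) \ge G^N_k(x) + \langle \nabla G^N_k(x), y-x\rangle - \tfrac{l}{2}\|y-x\|^2$. Expanding $\|y-x\|^2$ and collecting terms shows this is precisely the first-order characterization of convexity of the map $z \mapsto G^N_k(z) + \tfrac{l}{2}\|z\|^2$; hence, multiplying by $\eta>0$, the function $z \mapsto \eta G^N_k(z) + \tfrac{\eta l}{2}\|z\|^2$ is convex.

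Next, with $a = f_0(z_{k-1})$ and $\Phi(z) = \tfrac12\|z-a\|^2 + \eta G^N_k(z)$, I would add and subtract $\tfrac{\eta l}{2}\|z\|^2$ to write
\[
\Phi(z) = \Big(\tfrac{1-\eta l}{2}\|z\|^2 - \langle z,a\rangle + \tfrac12\|a\|^2\Big) + \Big(\eta G^N_k(z) + \tfrac{\eta l}{2}\|z\|^2\Big).
\]
For $\eta < l^{-1}$ we have $1-\eta l > 0$, so the first bracket is a quadratic with Hessian $(1-\eta l)I \succ 0$, i.e.\ $(1-\eta l)$-strongly convex, and the second bracket is convex by the previous step; a strongly convex function plus a convex function is strongly convex, so $\Phi$ is $(1-\eta l)$-strongly convex on $\Xsp$. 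Since $\Phi$ is also coercive and lower semicontinuous, it admits a minimizer, and strict convexity forbids a second one, so the minimizer is unique. As $\text{prox}_{\eta G^N_k}(f_0(x))$ is, by definition, the set of minimizers of $\tilde z \mapsto \tfrac12\|\tilde z - f_0(x)\|^2 + \eta G^N_k(\tilde z)$ — which has exactly the form of $\Phi$ with $a = f_0(x)$ — it follows that $\text{prox}_{\eta G^N_k}(f_0(x))$ is a singleton for every $x \in \Xsp$.

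The only step with any real content is the first one: converting $l$-smoothness into the weak-convexity inequality via Lemma~\ref{lemma:cost_func_lipschitz} and recognizing it as the convexity certificate for $G^N_k + \tfrac{l}{2}\|\cdot\|^2$; the remainder is bookkeeping. One caveat I would flag is that the uniqueness conclusion uses convexity (and closedness) of the domain $\Xsp$, which is implicit in the paper's use of the proximal operator.
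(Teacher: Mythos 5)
Your proof is correct, but it takes a somewhat different route from the paper. The paper works at the level of gradients: writing $\Theta(z) = \tfrac12\|z - f_0(\tilde z)\|^2 + \eta G^N_k(z)$, it computes $\langle \nabla\Theta(z_1)-\nabla\Theta(z_2),\, z_1-z_2\rangle = \|z_1-z_2\|^2 + \eta\langle \nabla G^N_k(z_1)-\nabla G^N_k(z_2),\, z_1-z_2\rangle$ and bounds the second term below by $-\eta l\|z_1-z_2\|^2$ via Cauchy--Schwarz and the $l$-smoothness of $G^N_k$, obtaining strong monotonicity of $\nabla\Theta$ with modulus $1-\eta l>0$ and hence strong convexity and a unique minimizer. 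You instead work at the level of function values: you use Lemma~\ref{lemma:cost_func_lipschitz} in the lower-bound direction to show $G^N_k$ is $l$-weakly convex (i.e.\ $G^N_k + \tfrac{l}{2}\|\cdot\|^2$ is convex), and then split the prox objective into a $(1-\eta l)$-strongly convex quadratic plus a convex remainder. Both arguments hinge on exactly the same mechanism ($\eta l < 1$ lets the quadratic absorb the worst-case negative curvature of $\eta G^N_k$), so neither buys substantially more than the other; your version has the minor advantages of explicitly addressing existence of the minimizer (coercivity plus lower semicontinuity, which the paper leaves implicit) and of flagging that uniqueness requires convexity of the feasible set $\Xsp$ — a hypothesis the paper's monotonicity argument also needs but does not mention.
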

\begin{proof}
  Let~$\Theta(z) = \frac{1}{2} \left\| z - f_0(\tilde{z}) \right\|^2 +
  \eta G^N_k(z)$.  We have that~$\nabla \Theta(z_1) - \nabla
  \Theta(z_2) = z_1 - z_2 + \eta \left( \nabla G^N_k(z_1) -\nabla
    G^N_k(z_2) \right)$.  It now follows that~$\left \langle \nabla
    \Theta(z_1) - \nabla \Theta(z_2) , z_1 - z_2 \right \rangle = \|
  z_1 - z_2 \|^2 + \eta \left \langle \nabla G^N_k(z_1) -\nabla
    G^N_k(z_2) , z_1 - z_2 \right \rangle$.  From
  Assumption~\ref{ass:cost_func_lipschitz}-(1), on the moving-horizon
  cost, we now get that~$\left \langle \nabla \Theta(z_1) - \nabla
    \Theta(z_2) , z_1 - z_2 \right \rangle \geq (1 - \eta l) \| z_1 -
  z_2 \|^2$, and since~$\eta l < 1$, we infer that~$\Theta$ is
  strongly convex, and therefore has a unique
  minimizer. Thus,~$\text{prox}_{\eta G^N_k} (f_0(\tilde{z})) = \arg
  \min_z \Theta(z)$ is a singleton.
\end{proof}

We note that the minimization~\eqref{eq:MHE_opt} defines a proximal
mapping w.r.t. the Euclidean metric, which we represent in a compact
form using the proximal operator as:
\begin{align}
  \begin{aligned}
    &z_k = \text{prox}_{\eta G^N_k} (f_0(z_{k-1})), \quad k > 0, \\
    &z_0 \sim \mu_0 \in \mathcal{P}(\Xsp),
  \end{aligned}
  \label{eq:MHE_rec_prox}
\end{align}
where~$\text{supp}(\mu_0) = \mathcal{K}_0 \subseteq \mathcal{C}_0$.
\subsection{Asymptotic stability of~$W_2$-MHE}
We present the asymptotic stability result for~$W_2$-MHE in this
section, before which we introduce the following assumption on
positive invariance of the discrete-time dynamics defined by the
map~$\text{prox}_{\eta G^N_k} \circ f$.
\begin{assumption}\longthmtitle{Positive invariance}
  We assume that there exists~$\alpha > (1 - \sqrt{1-2lL}){l}^{-1}$
  such that for all~$\eta \in (0, \alpha)$, we have~$\text{prox}_{\eta
    G^N_k} ( f (\mathcal{C}_{k-1}) ) \subseteq \mathcal{C}_k$.
\label{ass:basin_stability}
\end{assumption}

The above assumption ensures that under the discrete-time dynamics defined by
the map~$\text{prox}_{\eta G^N_k} \circ f$, any sequence starting in the basin of 
attraction~$\mathcal{C}_0$ of~${\Sigma_T}^{-1}(\yvec_{0:T})$ remains within the 
basins of attraction~$\mathcal{C}_k$ of~${\Sigma_T}^{-1}(\yvec_{k:k+T})$
at the subsequent instants of time~$k \in \naturals$.

We are now ready to present the asymptotic stability result for~$W_2$-MHE:
\begin{theorem}\longthmtitle{Asymptotic stability of~$W_2$-MHE}
\label{thm:MHE_asymp_stability}
The estimator~\eqref{eq:MHE_wasserstein}, under
Assumptions~\ref{ass:strong_loc_obs_regularity} to~\ref{ass:basin_stability},
with a constant step size~$\eta \in \displaystyle{ \left( \frac{1 - \sqrt{1-2lL}}{l}, \min\left\lbrace \alpha, \frac{1}{l} \right \rbrace \right)}$,
is an asymptotically stable observer for the system~$\Sigma$.
\end{theorem}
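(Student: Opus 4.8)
The plan is to reduce the asymptotic stability of the measure-valued recursion~\eqref{eq:MHE_wasserstein} to the convergence of the sample recursion~\eqref{eq:MHE_rec_prox}, and then to establish convergence of the latter by a descent-function (Lyapunov) argument built on the moving-horizon cost $G^N_k$. Concretely, since $\mu_k = {\text{prox}_{\eta G^N_k}}_\# (f_{0\#}\mu_{k-1})$ pushes samples forward through the map $z \mapsto \text{prox}_{\eta G^N_k}(f_0(z))$ (which, by the Strong convexity lemma, is single-valued for $\eta < l^{-1}$), it suffices to show that for every initial sample $z_0 \in \mathcal{K}_0 \subseteq \mathcal{C}_0$ the trajectory $\{z_k\}$ generated by~\eqref{eq:MHE_rec_prox} satisfies $d\big(z_k, {\Sigma_T}^{-1}(\yvec_{k:k+T})\big) \to 0$; pushing this pointwise statement through the measure then gives $\mu_k\big({\Sigma_T}^{-1}(\yvec_{k:k+T})\big)\to 1$. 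Assumption~\ref{ass:basin_stability} (Positive invariance) is exactly what guarantees $z_k$ stays in $\mathcal{C}_k$, so that the "target" never escapes the region where $G^N_k$'s critical points coincide with ${\Sigma_T}^{-1}(\yvec_{k:k+T})$.

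The heart of the argument is the descent estimate. I would take $V_k := G^N_k(z_k)$ as a candidate Lyapunov value and bound $V_{k+1} - V_k = G^N_{k+1}(z_{k+1}) - G^N_k(z_k)$ by splitting it as $\big(G^N_{k+1}(z_{k+1}) - G^N_{k+1}(f_0(z_k))\big) + \big(G^N_{k+1}(f_0(z_k)) - G^N_k(z_k)\big)$. The second bracket is controlled by Assumption~\ref{ass:cost_func_lipschitz}-(2): $|G^N_{k+1}(f_0(z_k)) - G^N_k(z_k)| \le L\|\nabla G^N_k(z_k)\|^2$. For the first bracket, note $z_{k+1} = \text{prox}_{\eta G^N_{k+1}}(f_0(z_k))$ is the proximal step applied to $f_0(z_k)$, so the standard proximal-gradient descent inequality — which follows from $l$-smoothness of $G^N_{k+1}$ (Assumption~\ref{ass:cost_func_lipschitz}-(1)) via Lemma~\ref{lemma:cost_func_lipschitz}, together with the optimality condition for the $\text{prox}$ — yields a bound of the form $G^N_{k+1}(z_{k+1}) - G^N_{k+1}(f_0(z_k)) \le -\big(\eta - \tfrac{l\eta^2}{2}\big)\|\nabla G^N_{k+1}(z_{k+1})\|^2$ or, in a one-step-gradient rewriting using~\eqref{eq:MHE_rec_est}, a comparable negative multiple of $\|\nabla G^N_{k+1}(z_{k+1})\|^2$. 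Combining the two pieces and using the relation $\|\nabla G^N_k(z_k)\|$ versus $\|\nabla G^N_{k+1}(z_{k+1})\|$ (again through $l$-smoothness and~\eqref{eq:MHE_rec_est}), one obtains $V_{k+1} - V_k \le -\beta(\eta)\,\|\nabla G^N_{k+1}(z_{k+1})\|^2$ for a coefficient $\beta(\eta) > 0$ precisely when $\eta$ lies in the stated interval; the lower endpoint $(1-\sqrt{1-2lL})/l$ is the smaller root of $\tfrac{l}{2}\eta^2 - \eta + L = 0$, i.e. the threshold past which the $L\|\nabla G^N_k\|^2$ "drift" from the moving horizon is dominated by the proximal descent, and $lL \le \tfrac12$ (Assumption~\ref{ass:cost_func_lipschitz}-(3)) is what makes this root real.

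Summing the descent inequality gives $\sum_k \|\nabla G^N_k(z_k)\|^2 < \infty$, hence $\nabla G^N_k(z_k) \to 0$; since by positive invariance $z_k \in \mathcal{C}_k$, and on $\mathcal{C}_k$ the only critical points of $G^N_k$ are the (isolated, by strong local observability via Lemma~\ref{lemma:strong_obs}) points of ${\Sigma_T}^{-1}(\yvec_{k:k+T})$ which are the stable fixed points with $\mathcal{C}_k$ their basin, a standard argument (boundedness of the state space, isolatedness of the critical set, and the fact that $G^N_k$ attains value $0$ exactly there) forces $d\big(z_k,{\Sigma_T}^{-1}(\yvec_{k:k+T})\big)\to 0$. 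Transferring to measures: for any open neighborhood $\mathcal{O}$ of ${\Sigma_T}^{-1}(\yvec_{k:k+T})$, eventually all samples lie in $\mathcal{O}$, so $\mu_k(\mathcal{O}) \to 1$, and since ${\Sigma_T}^{-1}(\yvec_{k:k+T}) = f_0^k({\Sigma_T}^{-1}(\yvec_{0:T}))$ is a fixed finite set of isolated points, taking neighborhoods to shrink gives $\mu_k\big({\Sigma_T}^{-1}(\yvec_{k:k+T})\big) \to 1$, which is the definition of an asymptotically stable observer.

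The main obstacle I anticipate is making the descent estimate genuinely uniform in $k$: the cost $G^N_k$ changes with $k$ (new measurements enter the horizon), so the bound tying $\|\nabla G^N_{k+1}(z_{k+1})\|$ to $\|\nabla G^N_k(z_k)\|$ and the passage from "$\nabla G^N_k(z_k)\to 0$" to "distance to the critical set $\to 0$" must rely on time-uniform constants — the $l$ and $L$ of Assumption~\ref{ass:cost_func_lipschitz} and the uniform basin structure of Assumption~\ref{ass:basin_stability} — rather than on any single fixed Lyapunov function. A secondary delicate point is the final "critical points in the basin are exactly the minimizers" step, which is where Theorem~\ref{thm:est_optimization} (inverse as minimizer) and the basin-of-attraction setup from Section~\ref{sec:FIE} do the real work; the rest is the now-routine proximal-gradient bookkeeping.
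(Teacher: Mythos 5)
Your proposal is correct and follows essentially the same route as the paper's proof: the same split of the cost difference into a proximal-descent term (bounded via $l$-smoothness and Lemma~\ref{lemma:cost_func_lipschitz} together with~\eqref{eq:MHE_rec_est}) and a horizon-shift term (bounded via Assumption~\ref{ass:cost_func_lipschitz}-(2)), followed by summing, using $\eta\bigl(1-\tfrac{l}{2}\eta\bigr)-L>0$ on the stated interval to conclude $\nabla G^N_k(z_k)\to 0$, and then invoking positive invariance and Theorem~\ref{thm:est_optimization} to pass to convergence of the samples and hence of the measures. The only cosmetic difference is that the paper does not need a per-step strict descent relation between $\|\nabla G^N_k(z_k)\|$ and $\|\nabla G^N_{k+1}(z_{k+1})\|$; it simply sums the two-index inequality and absorbs the boundary term $L\|\nabla G^N_0(z_0)\|^2$.
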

\begin{proof}
  By Assumption~\ref{ass:cost_func_lipschitz}-$(1)$, on the
  moving-horizon cost, and~Lemma~\ref{lemma:cost_func_lipschitz}, we
  have:
\begin{align*}
  | G^N_k(f_0(z_{k-1})) - G^N_k(z_k) -
  \langle \nabla G^N_k(z_k), f_0(z_{k-1}) - z_k \rangle | 
  \leq \frac{l}{2} \| f_0(z_{k-1}) - z_k \|^2.
\end{align*}
Substituting from~\eqref{eq:MHE_rec_est} into the above, we get:
\begin{align*}
  | G^N_k(f_0(z_{k-1})) - G^N_k(z_k) 
  - \eta \| \nabla G^N_k(z_k) \|^2 | 
  \leq \eta^2 \frac{l}{2} \| \nabla G^N_k(z_k) \|^2.
\end{align*}
It now follows that:
\begin{align*}
  G^N_k(z_k) \leq G^N_k(f_0(z_{k-1})) - \eta \left( 1 - \frac{l}{2}
    \eta \right) \| \nabla G^N_k(z_k) \|^2.
\end{align*}
From Assumption~\ref{ass:cost_func_lipschitz}-$(2)$, on the moving-horizon
cost, we have:
\begin{align*}
  G^N_k(z_k) \leq ~G^N_{k-1}(z_{k-1}) + L \| \nabla G^N_{k-1}(z_{k-1}) \|^2 
   - \eta \left( 1 - \frac{l}{2} \eta \right) \| \nabla G^N_k(z_k)
  \|^2.
\end{align*}
Summing the above inequality from~$k=1$ to~$K$, we get:
\begin{align*}
  \eta \left( 1 - \frac{l}{2} \eta \right) \sum_{k=1}^K
  \| \nabla G^N_k(z_k) \|^2 - L \sum_{k=1}^K \| \nabla G^N_{k-1}(z_{k-1}) \|^2 
  \leq G^N_0(z_0) - G^N_K(z_K).
\end{align*}
From here, we obtain:
\begin{align*}
  \left[ \eta \left( 1 - \frac{l}{2} \eta \right) - 
    L \right] \sum_{k=1}^K \| \nabla G^N_k(z_k) \|^2 
  &\leq G^N_0(z_0) - G^N_K(z_K) + L \| \nabla G^N_0(z_0) \|^2 \\
  &\leq G^N_0(z_0) + L \| \nabla G^N_0(z_0) \|^2.
\end{align*}
Since~$\eta \in \displaystyle{\left( \frac{1 - \sqrt{1-2lL}}{l},
    \frac{1}{l} \right)}$,
we have that $\eta \left( 1 - \frac{l}{2} \eta \right) - L > 0$ and
therefore, taking limits in the previous inequality, we deduce that
the series is summable. The latter implies that~$\lim_{k \rightarrow
  \infty} \nabla G^N_k(z_k) = 0$, and from~\eqref{eq:MHE_rec_est}, we
have that $\lim_{k \rightarrow \infty} \|z_k - f(z_{k-1})\| = 0$.
  
It now follows, by definition, from the above that $\lim_{k \rightarrow \infty} \nabla
G_k^{T+1} (z_k) = \lim_{k \rightarrow \infty} \nabla \left( J_{T}
  \left( \mathbf{y}_{k:k+T} , \Sigma_T (z_k) \right) \right) = 0$,
over a horizon of length~$T+1$ (with~$T \geq T_0$).
%
%
We now have that
the initial condition~$z_0 \in \mathcal{K}_0 \subseteq \mathcal{C}_0$ and
Assumption~\ref{ass:basin_stability} ensure that~$z_k \in
\mathcal{C}_k$, the basin of attraction of~$f^k \left(
  {\Sigma_T}^{-1}(\yvec_{0:T}) \right)$ and from the fact
that~$\lim_{k \rightarrow \infty} \nabla \left( J_{T} \left(
    \mathbf{y}_{k:k+T} , \Sigma_T (z_k) \right) \right) = 0$, we infer
that~$\lbrace z_k \rbrace$ converges to the local minima of~$J_{T}
\left( \mathbf{y}_{k:k+T} , \Sigma_T (\cdot) \right) $.  By
Theorem~\ref{thm:est_optimization}, it now follows that~$\lbrace z_k
\rbrace$ converges to the set~$\Sigma_T^{\quad -1}(\mathbf{y}_{k:k+T})$.
Therefore~$\lim_{k \rightarrow \infty} d (z_k ,
{\Sigma_T}^{-1}(\yvec_{k:k+T})) = 0$.

Moreover, since~$\lim_{k \rightarrow \infty} d (z_k ,
{\Sigma_T}^{-1}(\yvec_{k:k+T})) = 0$ for all~$z_0 \in \mathcal{K}_0$,
it follows that~$\lim_{k \rightarrow \infty} \mathcal{K}_k =
{\Sigma_T}^{-1}(\yvec_{k:k+T})$. We know that~$\supp(\mu_k) =
\mathcal{K}_k$, and therefore we get that~$\lim_{k \rightarrow \infty}
\mu_k \left( {\Sigma_T}^{-1}(\yvec_{k:k+T}) \right) = 1$. 
%
%
%
\end{proof}

\subsection{Robustness of $W_2$-MHE}
We now characterize the performance of the
estimator~\eqref{eq:MHE_wasserstein} on the system~$\Omega$
in~\eqref{eq:system_model}.  Since the true process and measurement
noise sequences remain unknown, we are interested in the robustness
properties of the estimator~\eqref{eq:MHE_wasserstein_ref}, in the
form of an upper bound by the norms of the disturbance sequences on
the estimation error.

We begin by constructing a reference estimator that recursively
generates the estimate sequence, given the true disturbance
sequences~$\lbrace w_k \rbrace_{k \in \naturals}$ and~$\lbrace v_k
\rbrace_{k \in \naturals}$, as follows:
\begin{align}
  \begin{aligned}
    &\bar{\mu}_k \in \arg \min_{\mu \in \mathcal{P}(\Xsp)} \frac{1}{2}
    W_2^2(\mu, f_{0\#} \bar{\mu}_{k-1})
    + \eta \mathbb{E}_{\mu} \left[ \bar{G}^N_k \right], \\
    &\text{given}~~\bar{\mu}_0 \in \mathcal{P}(\Xsp).
  \end{aligned}
  \label{eq:MHE_wasserstein_ref}
\end{align}
where, we employ for conciseness $\mathbf{w}\equiv\mathbf{w}_{k:k+N-1}
= (w_k,\dots,w_{k+N-1})$ and $\mathbf{v} \equiv \mathbf{v}_{k+1:k+N} =
(v_{k+1}, \dots, v_{k+N})$, so that $\bar{G}^N_k(z) \equiv
\bar{G}^N_k(z,\mathbf{w},\mathbf{v}) = J_{N-1} \left(
  \mathbf{y}_{k+1:k+N} , \Omega_{\mathbf{w}_{k:k+N-1}} (z) +
  \mathbf{v}_{k+1:k+N} \right)$.  Note that~$G^N_k = \bar{G}^N_k
\big|_{\mathbf{w}=0, \mathbf{v}=0}$.  We let~$\bar{\mathcal{K}}_k$ be
the support of~$\bar{\mu}_k$, with~$\bar{\mathcal{K}}_0 \subseteq
\bar{\mathcal{C}}_0$, where the definition of~$\bar{\mathcal{C}}_k$ is
similar to that of~$\mathcal{C}_k$ but taking the noise~$\lbrace w_k \rbrace$
and $\lbrace v_k \rbrace$ into account.
\begin{assumption}\longthmtitle{l-Smoothness w.r.t. disturbances}
  We assume that~$\| \nabla G^N_k (z) - \nabla \bar{G}^N_k(z) \|
  \leq l_w \| (\mathbf{w}_{k:k+N-1},\mathbf{v}_{k+1:k+N}) \|$ for all~$z \in \Xsp$.
\label{ass:bar_G}
\end{assumption}

Following the proof of Theorem~\ref{thm:MHE_asymp_stability}, under
the same set of underlying assumptions, we infer that the reference
estimator~\eqref{eq:MHE_wasserstein_ref} is almost surely an
asymptotically stable observer for the system~$\Omega$, given a particular
realization of the disturbances~$\lbrace w_k \rbrace_{k \in \naturals}$
and~$\lbrace v_k \rbrace_{k \in \naturals}$.

We now present the following theorem on the robustness of the
estimator~\eqref{eq:MHE_wasserstein}, characterized by a bound on the
error in the estimates generated by~\eqref{eq:MHE_wasserstein} with
respect to the estimates generated by the reference
estimator~\eqref{eq:MHE_wasserstein_ref}:
\begin{theorem}\longthmtitle{Robustness of~$W_2$-MHE}
  Under
  Assumptions~\ref{ass:system_lipschitz},~\ref{ass:strong_loc_obs_regularity},~\ref{ass:cost_func_lipschitz},
  and~\ref{ass:bar_G}, given the estimate sequences~$\lbrace \mu_k
  \rbrace_{k \in \naturals}$ generated by~\eqref{eq:MHE_wasserstein}
  and $\lbrace \bar{\mu}_k \rbrace_{k \in \naturals}$ generated by the
  reference estimator~\eqref{eq:MHE_wasserstein_ref}, with~$\mu_0 =
  \bar{\mu}_0$, we have~$W_2(\mu_k, \bar{\mu}_k) \leq
  \frac{c_f^{(2)}}{c_f^{(1)}} W C_k + \frac{\eta l_w \sqrt{N}
  }{c_f^{(1)}} (W + V) C_k$, for all~$k \in \mathbb{N}$, where~$C_k =
  \sum_{\ell=1}^k (\frac{c_f^{(1)}}{1 - \eta l} )^\ell $.
\end{theorem}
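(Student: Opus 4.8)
The plan is to push both measure-valued recursions \eqref{eq:MHE_wasserstein} and \eqref{eq:MHE_wasserstein_ref} down to their sample-path forms, couple them through a common initial sample, and then control the pointwise estimation error by a discrete Gr\"onwall-type argument. Exactly as in the derivation of \eqref{eq:MHE_rec_est}, the minimizer $\mu_k$ — unique as a consequence of the strong-convexity lemma and $\eta l<1$ — is the pushforward of $\mu_{k-1}$ under the map $z\mapsto z_k=\text{prox}_{\eta G^N_k}(f_0(z_{k-1}))$, i.e.\ $z_k=f_0(z_{k-1})-\eta\nabla G^N_k(z_k)$; the reference recursion \eqref{eq:MHE_wasserstein_ref} likewise yields $\bar z_k=\text{prox}_{\eta\bar{G}^N_k}(f(\bar z_{k-1},w_{k-1}))$, i.e.\ $\bar z_k=f(\bar z_{k-1},w_{k-1})-\eta\nabla\bar{G}^N_k(\bar z_k)$. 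I would couple $\mu_k$ and $\bar\mu_k$ by running both recursions from the same initial draw $z_0=\bar z_0\sim\mu_0=\bar\mu_0$ under the given (fixed) disturbance realization; the resulting joint law is an admissible transport plan, so $W_2^2(\mu_k,\bar\mu_k)\le\mathbb{E}\,\|z_k-\bar z_k\|^2$, and it is enough to bound $\|z_k-\bar z_k\|$ pathwise.

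For the one-step estimate I would subtract the two sample recursions and apply the triangle inequality, $\|z_k-\bar z_k\|\le\|f_0(z_{k-1})-f(\bar z_{k-1},w_{k-1})\|+\eta\,\|\nabla G^N_k(z_k)-\nabla\bar{G}^N_k(\bar z_k)\|$. The first term is bounded by Assumption~\ref{ass:system_lipschitz} as $c_f^{(1)}\|z_{k-1}-\bar z_{k-1}\|+c_f^{(2)}\|w_{k-1}\|\le c_f^{(1)}\|z_{k-1}-\bar z_{k-1}\|+c_f^{(2)}W$, using $\|w_{k-1}\|\le W$ from Assumption~\ref{ass:noise_char}. The second term I would split as $\|\nabla G^N_k(z_k)-\nabla G^N_k(\bar z_k)\|+\|\nabla G^N_k(\bar z_k)-\nabla\bar{G}^N_k(\bar z_k)\|$, bounded by $l\|z_k-\bar z_k\|$ via $l$-smoothness (Assumption~\ref{ass:cost_func_lipschitz}-(1)) and by $l_w\|(\mathbf{w}_{k:k+N-1},\mathbf{v}_{k+1:k+N})\|\le l_w\sqrt{N}\,(W+V)$ via Assumption~\ref{ass:bar_G} together with $\|w_j\|\le W$, $\|v_j\|\le V$ (each of the two noise blocks has $N$ entries, so $\|(\mathbf{w}_{k:k+N-1},\mathbf{v}_{k+1:k+N})\|^2\le N(W^2+V^2)$). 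Collecting terms and moving the implicit $\eta l\|z_k-\bar z_k\|$ contribution to the left — which is where the step-size restriction $\eta l<1$ is used — gives $\|z_k-\bar z_k\|\le r\,\|z_{k-1}-\bar z_{k-1}\|+s$ with $r=\tfrac{c_f^{(1)}}{1-\eta l}$ and $s=\tfrac{c_f^{(2)}W+\eta l_w\sqrt{N}(W+V)}{1-\eta l}$.

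Unrolling this linear recursion from $\|z_0-\bar z_0\|=0$ yields $\|z_k-\bar z_k\|\le s\sum_{j=0}^{k-1}r^j=\tfrac{s}{r}\sum_{\ell=1}^{k}r^\ell=\tfrac{s}{r}\,C_k$, and $\tfrac{s}{r}=\tfrac{c_f^{(2)}W+\eta l_w\sqrt{N}(W+V)}{c_f^{(1)}}=\tfrac{c_f^{(2)}}{c_f^{(1)}}W+\tfrac{\eta l_w\sqrt{N}}{c_f^{(1)}}(W+V)$, which is precisely the coefficient of $C_k$ in the statement. Since this bound on $\|z_k-\bar z_k\|$ holds along every sample path, substituting it into $W_2^2(\mu_k,\bar\mu_k)\le\mathbb{E}\,\|z_k-\bar z_k\|^2$ and taking square roots completes the argument.

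The step I expect to be the main obstacle is the first one: justifying rigorously that the variational minimizers $\mu_k$ and $\bar\mu_k$ are pushforwards of the previous measures under the respective single-valued prox maps, so that a one-sample coupling attains the $W_2$ distance. This rests on uniqueness of the prox — hence on strong convexity and $\eta l<1$ — and on the Kantorovich-potential/optimal-transport identification $\nabla\phi_k(x)=x-T_k^{-1}(x)$ that underlies \eqref{eq:MHE_rec_est}; the same implicitness of the prox (with $\nabla G^N_k$ evaluated at $z_k$ rather than at $f_0(z_{k-1})$) is what produces the $1/(1-\eta l)$ factor in $r$ and the step-size condition tacitly built into $C_k$. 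Everything after that — the one-step inequality and the geometric summation — is routine bookkeeping.
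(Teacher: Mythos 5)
Your proposal is correct and follows essentially the same route as the paper: both reduce the measure-valued recursions to the sample-path updates $z_k=f_0(z_{k-1})-\eta\nabla G^N_k(z_k)$ and $\bar z_k=f(\bar z_{k-1},w_{k-1})-\eta\nabla\bar G^N_k(\bar z_k)$, add and subtract $\nabla G^N_k(\bar z_k)$, absorb the $\eta l\|z_k-\bar z_k\|$ term using $\eta l<1$, unroll the resulting geometric recursion from $z_0=\bar z_0$, and lift the uniform pathwise bound to $W_2(\mu_k,\bar\mu_k)$ via the coupling induced by the common initial sample (the paper phrases it as a pushforward map $T_k$ with ${T_k}_{\#}\mu_k=\bar\mu_k$, you as an admissible transport plan, which is the same device and requires only admissibility, not optimality). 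The issue you flag about identifying the minimizers with the prox/pushforward updates is handled in the paper at exactly the same level of rigor you propose, so there is no gap relative to the paper's own argument.
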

\begin{proof}
The estimator~\eqref{eq:MHE_wasserstein_ref}
yields the following reference recursive scheme:
\begin{align}
  \bar{z}_k = f(\bar{z}_{k-1}, w_{k-1}) - \eta \nabla
  \bar{G}^N_k(\bar{z}_k),
  \label{eq:ref_MHE}
\end{align}
where the above is derived similarly to the noiseless case.
Let~$\lbrace z_k \rbrace_{k \in \naturals}$ and~$\lbrace \bar{z}_k
\rbrace_{k \in \naturals}$ be the estimate sequences generated
by~\eqref{eq:MHE_rec_est} and~\eqref{eq:ref_MHE} respectively,
with~$z_0 = \bar{z}_0$, for which we have:
\begin{align*}
  \| z_k - \bar{z}_k \|  &= \| f_0(z_{k-1}) - f(\bar{z}_{k-1}, w_{k-1}) - \eta \nabla G^N_k(z_k)
  + \eta \nabla \bar{G}^N_k(\bar{z}_k)  \| \\
  & = \| f_0(z_{k-1}) - f_0(\bar{z}_{k-1})  + f_0(\bar{z}_{k-1}) - f(\bar{z}_{k-1}, w_{k-1}) \\
  & ~~~- \eta \nabla G^N_k(z_k) + \eta \nabla
  G^N_k(\bar{z}_k)
  - \eta \nabla G^N_k(\bar{z}_k) + \eta \nabla \bar{G}^N_k(\bar{z}_k)  \| \\
  &\leq c_f^{(1)} \| z_{k-1} - \bar{z}_{k-1} \| + c_f^{(2)} \| w_{k-1}
  \|
  +  \eta l \| z_k - \bar{z}_k \| \\
  &~~~ + l_w \eta \|
  (\mathbf{w}_{k:k+N-1},\mathbf{v}_{k+1:k+N}) \|,
\end{align*}
where the final inequality follows from
Assumptions~\ref{ass:system_lipschitz},~\ref{ass:cost_func_lipschitz}, and~\ref{ass:bar_G}, on the
several Lipschitz properties of $f$ the gradient of $G^N_k$, and
$\bar{G}^N_k$, respectively.  Further, since~$\eta l < 1$, we obtain
from the above that:
\begin{align*}
  \|  z_k - \bar{z}_k \| 
  & \leq \left( \frac{1}{1 - \eta l} \right) \left( c_f^{(1)} \|
    z_{k-1} - \bar{z}_{k-1} \|
    + c_f^{(2)} \| w_{k-1} \|  \right. \\
  & ~~~ \left. + 
    \eta l_w \| (\mathbf{w}_{k:k+N-1},\mathbf{v}_{k+1:k+N}) \| \right) \\
  & \leq \left( \frac{c_f^{(1)}}{1 - \eta l} \right)^k \| z_0 -
  \bar{z}_0 \|
  + \frac{c_f^{(2)}}{c_f^{(1)}} \sum_{\ell=1}^k \left( \frac{c_f^{(1)}}{1 - \eta l} \right)^\ell \| w_{k-\ell} \| \\
  & ~~~
  + \frac{\eta l_w}{c_f^{(1)}}  \sum_{\ell=1}^k \left( \frac{c_f^{(1)}}{1
      - \eta l} \right)^\ell  \hspace{-0.05in}
  \| (\mathbf{w}_{k-\ell+1:k-\ell+N},\mathbf{v}_{k-\ell+2:k-\ell+N+1}) \| \\
  & \leq \frac{c_f^{(2)}}{c_f^{(1)}} W C_k + \frac{\eta l_w \sqrt{N}
  }{c_f^{(1)}} (W + V) C_k .
\end{align*}
We note that if~$ \frac{c_f^{(1)}}{1 - \eta l} < 1$, we have
that~$\lim_{k \rightarrow \infty} C_k = \frac{c_f^{(1)}}{1 - \eta l -
  c_f^{(1)}}$ is finite, and therefore,~$\| z_k - \bar{z}_k \|$ is
bounded as $k \rightarrow \infty$. We note here that even when~$z_0
\neq \bar{z}_0$, the effect of this initial discrepancy vanishes as~$k
\rightarrow \infty$. 

Now, let~$T_k : \mathcal{K}_k \rightarrow \bar{\mathcal{K}}_k$ 
be a map such that for sequences~$\lbrace z_k \rbrace$ and~$\lbrace \bar{z}_k \rbrace$ 
generated by~\eqref{eq:MHE_rec_est} and~\eqref{eq:ref_MHE} respectively, 
with~$z_0 = \bar{z}_0$, we have~$T_k(z_k) = \bar{z}_k$.
It then follows that~${T_k}_{\#} \mu_k = \bar{\mu}_k$. 
Now, from the above, and by definition of the~$2$-Wasserstein distance,
we have: 
\begin{align*}
  W_2(\mu_k, \bar{\mu}_k) &\leq \left( \int_{z \in \mathcal{K}_k} \|z - T_k(z) \|^2 d\mu_k(z) \right)^{\frac{1}{2}} \\
  &\leq  \left( \int_{z \in \mathcal{K}_k} \left| \frac{c_f^{(2)}}{c_f^{(1)}} W C_k + \frac{\eta l_w \sqrt{N}}{c_f^{(1)}} (W + V) C_k \right|^2 d\mu_k(z) \right)^{\frac{1}{2}} \\
  &\leq \frac{c_f^{(2)}}{c_f^{(1)}} W C_k + \frac{\eta l_w
    \sqrt{N}}{c_f^{(1)}} (W + V) C_k.
\end{align*}
\end{proof}
\section{A KL-Moving-Horizon Estimator}
\label{sec:KL-MHE}

In this section, we derive a moving-horizon estimator, which we refer
to as~KL-MHE, to generate a sequence of probability
distributions~$\lbrace \mu_k \rbrace_{k \in \naturals}$. Using the
KL-divergence~$\subscr{D}{KL}$ as the choice of divergence in the
moving-horizon formulation~\eqref{eq:MHE_unifying_formulation}, we
obtain:
\begin{align}
  \begin{aligned}
    \mu_k \in \arg \min_{\mu \in \mathcal{P}(\Xsp)} &
    \subscr{D}{KL}(\mu \| f_{0\#} \mu_{k-1}) + \eta \mathbb{E}_{\mu} \left[ G^N_k \right], \\
    \text{given}~~\mu_0 \in \mathcal{P}(\Xsp)&.
  \end{aligned}
  \label{eq:MHE_KL-div}
\end{align}


%
We note that any local minimizer~$\mu_k$ of~\eqref{eq:MHE_KL-div} is
a critical point of the objective functional, and, therefore, it
satisfies:
\begin{align*}
	c = \frac{\delta}{\delta \mu} \left[   \subscr{D}{KL}(\mu \| f_{0\#} \mu_{k-1})
	+ \eta \mathbb{E}_{\mu} \left[ G^N_k \right]
   \right] \bigg|_{\mu = \mu_k},
\end{align*} 
where~$c$ is a constant
(from the constraint~$\int_{\Xsp} d\mu(x) = 1$, for $\mu \in
\mathcal{P}(\Xsp)$, due to which the first variation is defined up to
an additive constant). From the above, we get:
\begin{align*}
	c = \log \left( \frac{\rho_k}{ f_{0\#} \rho_{k-1}} \right) (x) + \eta G^N_k (x),
\end{align*}
where for any~$\ell \in \lbrace 0,1, \ldots \rbrace$,~$\rho_{\ell}$ is the density function corresponding to the
measure~$\mu_{\ell}$.
Therefore, the corresponding recursive update scheme for the density function is given by:
\begin{align}
  \rho_k (x) = c_k \left( f_{0\#} \rho_{k-1}(x)  \right)
  \exp \left( -  \eta G^N_k(x) \right),
  \label{eq:KL-MHE_rec_update}
\end{align}
where~$c_k$ is the normalization constant. 
We note that the above is a particle filter formulation,
with the horizon cost~$G^N_k$ defining the weighting function. Implementable filters are
obtained by a Sequential Monte Carlo method, see~\cite{AD-NDF-NG:01}.
We now present the asymptotic stability result for KL-MHE:
\begin{theorem}\longthmtitle{Asymptotic stability of KL-MHE}
  The estimator~\eqref{eq:MHE_KL-div}, under Assumptions~\ref{ass:system_lipschitz} 
  to~\ref{ass:lower_semicts_set_valued}, is an asymptotically
   stable observer for the system~$\Sigma$.
\end{theorem}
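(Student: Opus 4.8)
The plan is to solve the recursion \eqref{eq:MHE_KL-div} in closed form and then extract the asymptotic concentration of the measures by a Laplace-type estimate. Starting from the density update \eqref{eq:KL-MHE_rec_update}, $\rho_k = c_k\,(f_{0\#}\rho_{k-1})\,e^{-\eta G^N_k}$, I would test against an arbitrary bounded $\varphi$ and unfold the pushforwards one step at a time; using $\int \varphi\,d(f_{0\#}\mu)=\int(\varphi\circ f_0)\,d\mu$, the normalization constants $c_1,\dots,c_k$ collapse into a single factor and one obtains, for every $k$,
\begin{align*}
  \int_{\Xsp}\varphi\,d\mu_k = \frac{\int_{\Xsp}(\varphi\circ f_0^k)(z)\,e^{-\eta S_k(z)}\,d\mu_0(z)}{\int_{\Xsp}e^{-\eta S_k(z)}\,d\mu_0(z)}, \qquad S_k(z):=\sum_{j=1}^k G^N_j\big(f_0^j(z)\big),
\end{align*}
where $S_k\ge 0$ is nondecreasing in $k$. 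Equivalently, $\mu_k=(f_0^k)_{\#}\lambda_k$ with $d\lambda_k\propto e^{-\eta S_k}\,d\mu_0$. Since strong local observability gives $\Sigma_T^{-1}(\yvec_{k:k+T})=f_0^k\big(\Sigma_T^{-1}(\yvec_{0:T})\big)$ (as recalled in Section~\ref{sec:FIE}), it is enough to show that $\lambda_k$ concentrates on $Z^\star:=\Sigma_T^{-1}(\yvec_{0:T})$ at a rate strong enough to survive the forward transport by $f_0^k$.

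Next I would identify the limiting weight. For $z\in Z^\star$, the orbit $f_0^j(z)$ reproduces the data over every window — this uses the stabilization of the indistinguishability classes built into the definition of strong local observability — so $G^N_j(f_0^j(z))=0$ for all $j$ and hence $S_k(z)\equiv 0$. For $z\notin Z^\star$, the overlapping horizons $\{j+1,\dots,j+N\}$, $j=1,\dots,k$, sweep the index range $\{1,\dots,k\}$, so $S_k(z)$ is bounded below by a fixed-fraction multiple of a full-information discrepancy of $z$ over that range; by Theorem~\ref{thm:est_optimization} such a $z$ is not a minimizer of the full-information cost, and the standing requirement $\lim_{T\to\infty}J_T(\yvec_{0:T},\Sigma_T(z))=\infty$ then forces $S_k(z)\to\infty$. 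Assumption~\ref{ass:lower_semicts_set_valued}, via the equivalence in Theorem~\ref{thm:est_optimization} (off-target critical points are strict), together with the Lipschitz bounds of Assumption~\ref{ass:system_lipschitz}, upgrades this to a locally uniform statement: for every open $U\supseteq Z^\star$ one has $\inf_{z\in\supp\mu_0\setminus U}S_k(z)\to\infty$.

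The concentration step then reads: fixing $U\supseteq Z^\star$ and a smaller neighborhood $U_0$ of $Z^\star$ that carries positive $\mu_0$-mass (which holds, e.g., whenever $\mu_0$ has a density that is positive near $Z^\star$), bound
$\lambda_k(\Xsp\setminus U)\le \mu_0(\Xsp\setminus U)\,e^{-\eta\inf_{\Xsp\setminus U}S_k}\big/\int_{U_0}e^{-\eta S_k}\,d\mu_0$,
so that $\lambda_k(\Xsp\setminus U)\to 0$, and transport this back through $\mu_k=(f_0^k)_{\#}\lambda_k$ to conclude that $\mu_k$ asymptotically places all of its mass on arbitrarily small neighborhoods of $f_0^k(Z^\star)=\Sigma_T^{-1}(\yvec_{k:k+T})$ — which is exactly the asserted asymptotic stability of the observer. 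The main obstacle is this last quantitative estimate: one must lower-bound the partition function $\int_{U_0}e^{-\eta S_k}\,d\mu_0$ and, simultaneously, beat the possible $(c_f^{(1)})^k$ expansion of the map $f_0^k$ when pulling the neighborhood back. The mechanism I would use is that although a single short horizon $N<T_0$ does not render the system observable, over any block of $T_0$ consecutive steps the accumulated cost is, by the observability rank condition (Lemma~\ref{lemma:strong_obs}), nondegenerate transverse to $Z^\star$, so the reweighting contracts the spread by a factor that one shows dominates $(c_f^{(1)})^{T_0}$; iterating this per-block contraction, together with $S_k\equiv 0$ on $Z^\star$, closes the argument. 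Carrying out this balance rigorously — particularly when $f_0$ is merely nonexpansive rather than contractive in the region of interest — is the delicate core of the proof, and is where the finer structure of $J_T$ and the curvature of the accumulated cost around $Z^\star$ must be used.
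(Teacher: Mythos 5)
Your proposal follows essentially the same route as the paper's proof: unroll the update \eqref{eq:KL-MHE_rec_update} into a Gibbs-type representation weighted by the accumulated cost $\sum_{\ell=1}^k G^N_\ell(f_0^\ell(\cdot))$, observe that this sum vanishes on ${\Sigma_T}^{-1}(\yvec_{0:T})$ and diverges off it, and conclude concentration of $\mu_k$ on $f_0^k\left({\Sigma_T}^{-1}(\yvec_{0:T})\right) = {\Sigma_T}^{-1}(\yvec_{k:k+T})$. The quantitative balance you flag as the delicate unfinished core (locally uniform divergence of the accumulated cost, the partition-function lower bound, and the possible expansion under $f_0^k$) is exactly where the paper's own argument also stops, passing directly from pointwise divergence to the limit statement, so your sketch is, if anything, more explicit about what that final step requires.
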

\begin{proof}
  We know that for any map~$\mathcal{T}$ and measure~$\mu$, we have
  that~$d \mathcal{T}_{\#} \mu (x) = d \mu \left( \mathcal{T}^{-1}(x)
  \right)$.  It then follows from~\eqref{eq:KL-MHE_rec_update} that:
\begin{align*}
  \rho_k (x) = c_k \rho_{k-1} (f^{-1}_0(x)) \exp \left( - \eta
    G^N_k(x) \right).
\end{align*}
We now rewrite the above as:
\begin{align*}
  \rho_k (f_0(x)) = c_k \rho_{k-1}(x) \exp \left( - \eta
    G^N_k(f_0(x)) \right).
\end{align*}
Repeating the above process~$k$ times, we obtain:
\begin{align*}
  \rho_k (f^k_0(x)) = C_k \rho_0 (x) \exp \left( - \eta \sum_{\ell=1}^k
    G^N_{\ell}(f^{\ell}_0(x)) \right),
\end{align*}
where~$C_k = c_k c_{k-1} \ldots c_1$ is the normalization constant.
If~$x \notin {\Sigma_T}^{-1} (\yvec_{0:T})$, we have that~$\lim_{k
  \rightarrow \infty} \rho_k (f^k_0(x)) = 0$, since~$\sum_{\ell=1}^k
G^N_{\ell}(f^{\ell}_0(x)) \rightarrow \infty$ as $k \rightarrow
\infty$ for all~$x \notin {\Sigma_{T}}^{-1}
(\yvec_{0:T})$ (by definition of the cost function, the sum diverges over
an infinitely long horizon). 
%
%
 Thus, we get:
\begin{align*}
  \lim_{k \rightarrow \infty} \mu_k \left( f^k_0 \left(
      {\Sigma_{T}}^{-1} (\yvec_{0:T}) \right) \right) = \lim_{k
    \rightarrow \infty} \mu_k \left( {\Sigma_T}^{-1} (\yvec_{k:k+T})
  \right) = 1.
\end{align*}
\end{proof}
%
%
\section{Differential privacy}
\label{sec:Diff_privacy}
In this section, we discuss the mechanism for encoding the desired
level of differential privacy in moving-horizon estimators. We then
apply this mechanism to the two estimators presented in the previous
sections, the~$W_2$-MHE and KL-MHE.  We conclude the section with a
discussion on differential privacy of the estimators over a time
horizon.
Our aim here is to guarantee differential privacy of the measurement data $\yvec_{0:T}$, 
when the estimate sequence $\lbrace \mu_k \rbrace$ is released (made public). 
We consider the class of scenarios where 
an adversary can access the released estimates, while the measurement data itself 
is not accessible to the adversary.
Our goal in incorporating differential privacy in estimation is to ensure that
the adversary is not able to distinguish (in the sense of $\epsilon$-differential privacy) 
between measurement sequences 
that are $\delta$-adjacent, using the released estimates, 
which is an underlying risk when the
estimates are directly released without such a consideration. 

Given the framework~\eqref{eq:MHE_unifying_formulation}, we encode
differential privacy by an entropic regularization of the estimation
objective function, as follows:
\begin{align}
  \begin{aligned}
    \mu_k \in \arg \min_{\mu \in \mathcal{P}(\Xsp)} & \left[ s_k D(\mu,
      f_{0\#} \mu_{k-1})
      + s_k \eta \mathbb{E}_{\mu} \left[ G^N_k \right] \right.\\
    & ~~~ \left. - (1-s_k) S^{\mathcal{K}_k}(\mu) \right], \\
    \text{given}~~\mu_0 \in \mathcal{P}(\Xsp),
  \end{aligned}
  \label{eq:MHE_gen_form_reg}
\end{align}
where~$s_k \in [0,1]$ is a tunable time-dependent parameter
and~$\mathcal{K}_k$ is the support of~$f_{0\#}\mu_{k-1}$
(with~$\mathcal{K}_0$ being the support of~$\mu_0$).
Moreover,~$S^A(\mu) = \int_{A} \rho \log(\rho) \dvol$, where~$A
\subset \Xsp$ and~$d\mu = \rho \dvol$. We note that
when~$s_k = 1$, the above formulation reduces
to~\eqref{eq:MHE_unifying_formulation} and when~$s_k = 0$, it is
equivalent to an entropy maximization problem, yielding a uniform
distribution over the set~$f_0(\mathcal{K}_{k-1})$ as the solution.
Clearly, the uniform distribution is insensitive to the measurements,
and therefore offers maximum privacy, while being of no value to the
estimation objective.  The ensuing analysis in this section is
directed at determining upper bounds on the parameter
sequence~$\lbrace s_k \rbrace_{k \in \naturals}$ such that the MHE
offers~$\epsilon$-differential privacy. We rewrite the optimization
problem~\eqref{eq:MHE_gen_form_reg} for~$s_k \in (0,1]$ as follows:
\begin{align}
  \begin{aligned}
    \mu_k \in \arg \min_{\mu \in \mathcal{P}(\Xsp)} & \left[ D(\mu,
      f_{0\#} \mu_{k-1})
      + \eta \mathbb{E}_{\mu} \left[ G^N_k \right] \right.\\
    & ~~~ \left. - \left( \frac{1-s_k}{s_k} \right) S^{\mathcal{K}_k}(\mu) \right], \\
    \text{given}~~\mu_0 \in \mathcal{P}(\Xsp),
  \end{aligned}
  \label{eq:MHE_gen_form_reg_rewrite}
\end{align}
%
Let~$\yvec, \tilde{\yvec} \in \Ysp^{T+N+1}$ be two~$\delta$-adjacent
measurement sequences as in Definition~\ref{defn:diff_privacy}, over a
horizon~$\lbrace 0, \ldots, T+N \rbrace$, such that~$\| \yvec -
\tilde{\yvec} \| \leq \delta$ and let~$\lbrace \mu_k \rbrace_{k \in \naturals}$
and~$\lbrace \tilde{\mu}_k \rbrace_{k \in \naturals}$ be the sequences of estimates
derived from~\eqref{eq:MHE_gen_form_reg_rewrite}. In the following, we
determine conditions on $\lbrace s_k \rbrace_{k \in \naturals}$ that guarantee
differential privacy for each of the estimators derived in previous
sections.
\subsection{Differentially private~$W_2$-MHE}
We now design a differentially private~$W_2$-moving-horizon
estimator. We begin by considering:
\begin{align}
  \begin{aligned}
    \mu_k \in \arg \min_{\mu \in \mathcal{P}(\Xsp)}
    &  \left[ \frac{1}{2} W_2^2 (\mu , f_{0\#} \mu_{k-1}) 
      + \eta \mathbb{E}_{\mu} \left[ G^N_k \right] \right. \\
    & ~~~ \left. - \left( \frac{1-s_k}{s_k} \right) S^{\mathcal{K}_k}(\mu) \right], \\
    \text{given}~~\mu_0 \in \mathcal{P}(\Xsp)&,
  \end{aligned}
  \label{eq:MHE_wasserstein_reg}
\end{align}
for~$s_k \in (0,1]$. 
%

The following theorem provides a sufficient upper
bound on~$s_T$ such that the entropy-regularized~$W_2$-MHE
in~\eqref{eq:MHE_wasserstein_reg} is~$\epsilon_T$-differentially
private at a time instant~$T$.
\begin{theorem}\longthmtitle{Sensitivity of~$W_2$-MHE}
  Given two~$\delta$-adjacent measurement sequences~$\yvec,
  \tilde{\yvec} \in \Ysp^{T+N+1}$, under Assumption~\ref{ass:cost_func_lipschitz},
   we have that the estimates generated by~\eqref{eq:MHE_wasserstein_reg} 
   satisfy~$\subscr{D}{max} \left( \mu_T, \tilde{\mu}_T \right) \leq \epsilon_T$ if~$s_T \leq
  \epsilon_T \left( \epsilon_T + c_f^T \text{diam}(\mathcal{K}_0)
    \left( \eta l \delta + c_f^T \text{diam}(\mathcal{K}_{0})
      q(\delta) \right) \right)^{-1}$, where~$q: \realnonnegative
  \rightarrow \realnonnegative$ is a class-$\mathcal{K}$ function that
  satisfies~$q(0) = 0$.
  \label{thm:sensitivity_W2_MHE}
\end{theorem}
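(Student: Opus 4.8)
The plan is to read off closed-form expressions for the densities of $\mu_T$ and $\tilde{\mu}_T$ from the first-order optimality conditions of~\eqref{eq:MHE_wasserstein_reg}, bound their log-ratio pointwise, rearrange the resulting inequality to isolate $s_T$, and conclude with Lemma~\ref{lemma:diff_priv_max_div}. First I would compute the first variation of the objective in~\eqref{eq:MHE_wasserstein_reg} at the minimizer $\mu_T$ and set it equal to a constant $c_T$ (the multiplier for $\int d\mu = 1$). Using $\frac{\delta}{\delta\mu}\bigl[\tfrac12 W_2^2(\mu, f_{0\#}\mu_{T-1})\bigr] = \phi_T$ for the Kantorovich potential from $\mu_T$ to $f_{0\#}\mu_{T-1}$, $\frac{\delta}{\delta\mu}\mathbb{E}_\mu[G^N_T] = G^N_T$, and $\frac{\delta}{\delta\mu}S^{\mathcal{K}_T}(\mu) = \log\rho_T + 1$, this gives
\[
  \log\rho_T(x) = \frac{s_T}{1-s_T}\bigl(\phi_T(x) + \eta G^N_T(x)\bigr) + a_T, \qquad x \in \mathcal{K}_T,
\]
with $a_T$ a normalization constant, and the analogous identity for $\tilde{\rho}_T$ in terms of $\tilde{\phi}_T$ and $\widetilde{G}^N_T$. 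A structural observation to record here is that the support $\mathcal{K}_T = f_0^T(\supp(\mu_0))$ is determined by $\mu_0$ and $f_0$ only, so it is the same set for both runs, and by the Lipschitz bound in Assumption~\ref{ass:system_lipschitz} it satisfies $\text{diam}(\mathcal{K}_T) \le c_f^T\,\text{diam}(\mathcal{K}_0)$.

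Next I would subtract the two log-density identities. Because $\rho_T$ and $\tilde{\rho}_T$ are probability densities on the same compact set $\mathcal{K}_T$, there is a point $x^\star \in \mathcal{K}_T$ at which $\rho_T(x^\star) = \tilde{\rho}_T(x^\star)$, which pins down the difference of the normalization constants; eliminating it yields, with $\Delta\phi := \phi_T - \tilde{\phi}_T$ and $\Delta G := G^N_T - \widetilde{G}^N_T$,
\[
  \log\frac{\rho_T(x)}{\tilde{\rho}_T(x)} = \frac{s_T}{1-s_T}\Bigl[\bigl(\Delta\phi(x) - \Delta\phi(x^\star)\bigr) + \eta\bigl(\Delta G(x) - \Delta G(x^\star)\bigr)\Bigr].
\]
Each centered difference is at most $\text{diam}(\mathcal{K}_T)$ times the supremum over $\mathcal{K}_T$ of the relevant gradient (integrating along a segment from $x^\star$ to $x$). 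For the cost term, Assumption~\ref{ass:cost_func_lipschitz}-(4) gives $\|\nabla\Delta G\|_\infty \le l\delta$, so the cost contribution is at most $\eta l\delta\, c_f^T\text{diam}(\mathcal{K}_0)$. For the potential term, $\nabla\Delta\phi(x) = \widetilde{T}_T^{-1}(x) - T_T^{-1}(x)$ is the difference of the two inverse optimal transport maps (as in Section~\ref{sec:W2-MHE}), both carrying $\mathcal{K}_T$ into $\mathcal{K}_T$; I would show $\|\nabla\Delta\phi\|_\infty \le c_f^T\text{diam}(\mathcal{K}_0)\, q(\delta)$ for a class-$\mathcal{K}$ function $q$ with $q(0)=0$, so the potential contribution is at most $\bigl(c_f^T\text{diam}(\mathcal{K}_0)\bigr)^2 q(\delta)$. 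Taking the supremum over $x$ gives
\[
  \subscr{D}{max}(\mu_T, \tilde{\mu}_T) \le \frac{s_T}{1-s_T}\,c_f^T\text{diam}(\mathcal{K}_0)\bigl(\eta l\delta + c_f^T\text{diam}(\mathcal{K}_0)\, q(\delta)\bigr).
\]

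Finally, writing $\Delta = c_f^T\text{diam}(\mathcal{K}_0)\bigl(\eta l\delta + c_f^T\text{diam}(\mathcal{K}_0)q(\delta)\bigr)$, the desired conclusion $\subscr{D}{max}(\mu_T,\tilde{\mu}_T) \le \epsilon_T$ is implied by $\tfrac{s_T}{1-s_T}\Delta \le \epsilon_T$ (here $s_T<1$ under the hypothesized bound whenever $\Delta>0$, so $\tfrac{s_T}{1-s_T}$ is well defined and increasing in $s_T$), and this is equivalent to $s_T(\Delta + \epsilon_T) \le \epsilon_T$, i.e. to $s_T \le \epsilon_T(\epsilon_T + \Delta)^{-1}$, which is exactly the stated condition; Lemma~\ref{lemma:diff_priv_max_div} then converts the max-divergence bound into $\epsilon_T$-differential privacy. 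The main obstacle is the stability estimate $\|\nabla\phi_T - \nabla\tilde{\phi}_T\|_\infty \le c_f^T\text{diam}(\mathcal{K}_0)q(\delta)$ on the Kantorovich potentials: I would obtain it by tracking how the $\delta$-adjacency of the two measurement sequences propagates through the recursive sample-update scheme~\eqref{eq:MHE_rec_est} (with its entropy-induced modification), mirroring the robustness argument behind Theorem~\ref{thm:MHE_asymp_stability}, so that for matched samples the discrepancy between the two inverse transport maps is bounded by a quantity that vanishes as $\delta\to 0$, is continuous and monotone in $\delta$, and scales with the diameter of the time-$T$ support — this furnishes $q$. The remaining ingredients — the first-variation computation, the intermediate-value argument producing $x^\star$, the diameter estimate, and the final rearrangement — are routine.
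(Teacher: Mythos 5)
Your proposal follows essentially the same route as the paper's proof: first-order optimality conditions for the entropy-regularized problem, subtraction of the two log-density identities, the gradient bound via Assumption~\ref{ass:cost_func_lipschitz}-(4) together with the identity $\nabla\phi_T(x) = x - T_T^{-1}(x)$ and the diameter estimate $\mathrm{diam}(f_0^T(\mathcal{K}_0)) \le c_f^T\,\mathrm{diam}(\mathcal{K}_0)$, the intermediate point where the two densities coincide, integration along a straight segment, and the final rearrangement to the stated bound on $s_T$. The only difference is cosmetic: you flag the stability estimate on $\nabla(\phi_T - \tilde{\phi}_T)$ as work to be done, whereas the paper simply introduces the class-$\mathcal{K}$ function $q$ as, by definition, characterizing that dependence — so your treatment is, if anything, slightly more explicit about this step, and the argument is correct.
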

\begin{proof}

  Let~$G^N_k$ and~${\widetilde{G}}^N_{k}$ be the estimation objective
  functions at time instant~$k$, corresponding to the measurement
  sequences~$\yvec$ and~$\widetilde{\yvec}$ respectively, and
  let~$\mu_k$ and~$\widetilde{\mu}_k$ be the respective estimated
  probability measures, with~$\rho_k, \widetilde{\rho}_k$ the
  corresponding density
  functions. From~\eqref{eq:MHE_wasserstein_reg}, we get that for
  all~$k \in \lbrace 0, \ldots, T \rbrace$,~$\mu_k$, being the local
  minimizer is also a critical point of the objective functional. We
  therefore obtain:
\begin{align*}
  \phi_k(x) + G^N_k(x) + \left( \frac{1-s_k}{s_k} \right) \log
  (\rho_k(x)) = c,
\end{align*}
where~$\phi_k$ is the Kantorovich potential associated with the transport from~$\mu_k$ to~${f_0}_{\#}\mu_{k-1}$ 
and~$c$ is a constant. It now follows that:
\begin{align*}
  \nabla \phi_k(x) + \nabla G^N_k(x) + \left( \frac{1-s_k}{s_k}
  \right) \nabla \log (\rho_k)(x) = 0.
\end{align*}
Similarly, we have:
\begin{align*}
  \nabla \widetilde{\phi}_k(x) + \nabla {\widetilde{G}}^N_{k}(x) +
  \left( \frac{1-s_k}{s_k} \right) \nabla \log (\widetilde{\rho}_k)(x)
  = 0.
\end{align*}
Taking the difference between the above two equations:
\begin{align*}
  \nabla \left[ \log \left( \frac{\rho_k}{\widetilde{\rho}_k} \right)
  \right] (x) = - \left( \frac{s_k}{1-s_k} \right) \left[
    \nabla (\phi_k - \widetilde{\phi}_k)(x)  \right. 
  \left. + \nabla (G^N_k - {\widetilde{G}}^N_{k})(x) \right].
\end{align*}
We have that~$\nabla \phi_k(x) = x - T_k^{-1}(x)$, where~$\mu_k =
{T_k}_{\#} \left( f_{0\#} \mu_{k-1} \right)$. This implies that~$
\nabla (\phi_k - \widetilde{\phi}_k)(x) = - (T_k^{-1}(x) -
\widetilde{T}_k^{-1}(x) )$.  However,~$T_k^{-1}(x),
\widetilde{T}_k^{-1}(x) \in f_0(\mathcal{K}_{k-1}) =
f^k_0(\mathcal{K}_0)$, and therefore~$\| \nabla (\phi_k -
\widetilde{\phi}_k)(x) \| \leq c_f^k \text{diam}(\mathcal{K}_0)
q(\delta)$, for all~$x \in f^k_0(\mathcal{K}_0)$ and some class-$\mathcal{K}$
function~$q$.  We let~$q$ characterize the dependence of~$\phi$
on the measurement sequence,
 and we get that $\| \nabla
(\phi_k - \widetilde{\phi}_k)(x) \| = 0$ for all~$x \in \Xsp$, when~$\delta = 0$.
Moreover, by Assumption~\ref{ass:cost_func_lipschitz}, we
get~$\|\nabla (G^N_k - {\widetilde{G}}^N_{k})(x)\| \leq l \delta$.
Therefore, we obtain:
\begin{align}
  \left\| \nabla \left[ \log \left( \frac{\rho_k}{\widetilde{\rho}_k}
      \right) \right] \right\| \leq \left( \frac{s_k}{1-s_k} \right)
  \left( c_f^k \text{diam}(\mathcal{K}_0) q(\delta) + l \delta
  \right).
  \label{eq:W2_grad_log_bound}
\end{align}
We also have that for any~$x \in f^k_0(\mathcal{K}_0)$:
\begin{align}
\begin{aligned}
  \log \left( \frac{\rho_k}{\widetilde{\rho}_k} \right) (x)
  =  \log \left( \frac{\rho_k}{\widetilde{\rho}_k} \right)(\bar{x}) 
  + \int_0^1 \nabla \left[ \log \left(
      \frac{\rho_k}{\widetilde{\rho}_k} \right) \right] (\gamma(t))
  \cdot \dot{\gamma}(t) dt,
  \end{aligned}
  \label{eq:W2_log_integral_curve}
\end{align}
where~$\gamma(0) = \bar{x}$ and~$\gamma(1) = x$. Since~$\rho_k$
and~$\widetilde{\rho}_k$ are continuous,
with~$\int_{f^k_0(\mathcal{K}_0)} (\rho_k - \widetilde{\rho}_k) = 0$
(since $\int_{f^k_0(\mathcal{K}_0)} \rho_k =
\int_{f^k_0(\mathcal{K}_0)} \widetilde{\rho}_k = 1$), there exists
an~$\bar{x} \in f^k_0(\mathcal{K}_0)$ such that $\rho_k(\bar{x}) =
\widetilde{\rho}_k(\bar{x})$, which implies that~$ \log \left(
  \frac{\rho_k}{\widetilde{\rho}_k} \right) (\bar{x}) = 0$.
From~\eqref{eq:W2_grad_log_bound}
and~\eqref{eq:W2_log_integral_curve}, for a straight line
segment~$\gamma$, we therefore obtain:
\begin{align*}
  \left| \log \left( \frac{\rho_k}{\widetilde{\rho}_k} \right)(x)
  \right| \leq \left( \frac{s_k}{1-s_k} \right)
  \left( c_f^k \text{diam}(\mathcal{K}_0) q(\delta) + l \delta \right) \times 
   c_f^k \text{diam}(\mathcal{K}_0),
\end{align*}
where we have used the fact that~$\int_{0}^1 |\dot{\gamma}(t)| dt = \|
x - \bar{x} \| \leq \text{diam}({f_0^k}(\mathcal{K}_0)) \leq c_f^k
\text{diam}(\mathcal{K}_0)$.  Thus, for~$k= T$, we let:
\begin{align*}
  \left| \log \left( \frac{\rho_T}{\widetilde{\rho}_T} \right) (x) \right|
  \leq \left( \frac{s_T}{1-s_T} \right) \left( c_f^T
    \text{diam}(\mathcal{K}_0) q(\delta) + l \delta \right) \times 
   c_f^T \text{diam}(\mathcal{K}_0) 
  \leq \epsilon_T,
\end{align*}
from which we obtain that:
\begin{align*}
  s_T \leq \frac{\epsilon_T }{\left( \epsilon_T + c_f^T
      \text{diam}(\mathcal{K}_0) \left( \eta l \delta + c_f^T
        \text{diam}(\mathcal{K}_{0}) q(\delta) \right) \right)},
\end{align*}
and since~$\left| \log \left( \frac{\rho_T}{\widetilde{\rho}_T}
  \right)(x) \right|  \leq \epsilon_T$ for all~$x \in
f^T_0(\mathcal{K}_0)$, we have that~$\sup_{x \in f^T_0(\mathcal{K}_0)}
\left| \log \left( \frac{\rho_T}{\widetilde{\rho}_T} \right) \right| =
\subscr{D}{max}(\mu_T , \widetilde{\mu}_T) \leq \epsilon_T$.
\end{proof}
%
%
%
As noted earlier, Theorem~\ref{thm:sensitivity_W2_MHE}
provides a sufficient upper bound on~$s_T$ for differential privacy of the
estimate at~$T$. 
The goal, however, is to guarantee the desired level of differential privacy over a time
horizon~$\lbrace 0, \ldots, T \rbrace$.  The key issue here is that
the recursive update scheme of the estimator introduces a dependence
between the estimates at different time instants. This essentially
means that imposing an upper bound on sensitivity for the marginal
distributions~$\mu_k$ individually, without regard to the dependence
between these distributions, may not be sufficient. 
%
%
Therefore, to guarantee the desired level of differential privacy over the time horizon, we
must impose an upper bound on the sensitivity of the joint
distribution~$\sigma \in \mathcal{P}(\Xsp^{T+1})$, where the
estimates~$\mu_k$ are the marginals of~$\sigma$
over~$\Xsp$. 

The following theorem provides a sufficient upper bound on~$\lbrace s_k \rbrace_{k=1}^T$ such that
the entropy-regularized~$W_2$-MHE in~\eqref{eq:MHE_wasserstein_reg}
is~$\epsilon$-differentially private over a time horizon~$\lbrace 0,
\ldots, T \rbrace$. 
\begin{theorem}\longthmtitle{Differentially private~$W_2$-MHE}
  Given two~$\delta$-adjacent measurement
  sequences~$\yvec, \widetilde{\yvec} \in \Ysp^{T+N+1}$, 
  under Assumption~\ref{ass:cost_func_lipschitz},
  we have that the estimates generated by~\eqref{eq:MHE_wasserstein_reg}
  satisfy~$\subscr{D}{max} \left( \sigma , \widetilde{\sigma} \right) \leq
  \epsilon$ if~$\sum_{k=1}^T \left( \frac{s_k}{1-s_k} \right) c_f^k
  \leq \frac{\epsilon}{ l \delta \text{diam}(\mathcal{K}_0)
  }$.
  \label{thm:diff_privacy_W2_MHE}
\end{theorem}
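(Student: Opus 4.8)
\emph{Proof proposal.} The plan is to exploit the Markovian structure of the recursion~\eqref{eq:MHE_wasserstein_reg} and reduce the sensitivity of the joint law~$\sigma$ to a sum of the sensitivities of its one-step transition kernels. Since~$\mu_k$ depends only on~$f_{0\#}\mu_{k-1}$ and on the horizon data, the joint distribution~$\sigma \in \mathcal{P}(\Xsp^{T+1})$ factors as~$\rho_\sigma(x_0,\dots,x_T) = \rho_0(x_0)\prod_{k=1}^{T} p_k(x_k \mid x_{k-1})$, where~$p_k(\cdot \mid x_{k-1})$ is the conditional law of the time-$k$ estimate given that the time-$(k-1)$ estimate sits at~$x_{k-1}$. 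Writing the first-variation condition for this conditional subproblem, exactly as for the marginal in the proof of Theorem~\ref{thm:sensitivity_W2_MHE}, I would identify~$p_k$ with the absolutely continuous, Gibbs-type density
\[
  p_k(x_k \mid x_{k-1}) \;\propto\; \exp\!\left( -\tfrac{s_k}{1-s_k}\left( \tfrac12 \| x_k - f_0(x_{k-1}) \|^2 + G^N_k(x_k) \right) \right),
\]
supported on~$f_0^k(\mathcal{K}_0)$, and analogously~$\widetilde p_k$ with~$\widetilde G^N_k$ in place of~$G^N_k$; note that as~$s_k\to 1$ this kernel concentrates on~$\text{prox}_{G^N_k}(f_0(x_{k-1}))$, recovering the deterministic scheme~\eqref{eq:MHE_rec_prox}, so it is the natural regularized transition. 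This is the step where the entropic regularization~($s_k < 1$) is indispensable: without it the transitions are deterministic and~$\sigma,\widetilde\sigma$ are mutually singular, so~$\subscr{D}{max}(\sigma,\widetilde\sigma)=\infty$.

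With the factorization in hand, since~$\mu_0 = \widetilde\mu_0$ the~$\rho_0$ factor cancels and the chain rule gives~$\log\frac{\rho_\sigma}{\rho_{\widetilde\sigma}}(x_0,\dots,x_T) = \sum_{k=1}^T \log\frac{p_k(x_k\mid x_{k-1})}{\widetilde p_k(x_k\mid x_{k-1})}$, whence~$\subscr{D}{max}(\sigma,\widetilde\sigma) \le \sum_{k=1}^T \sup \big| \log\tfrac{p_k}{\widetilde p_k} \big|$. For fixed~$k$, differentiating in~$x_k$ and subtracting, the measurement-independent displacement term~$\tfrac12\|x_k - f_0(x_{k-1})\|^2$ and the normalization constant drop out, leaving~$\nabla_{x_k}\log\frac{p_k}{\widetilde p_k}(x_k\mid x_{k-1}) = -\tfrac{s_k}{1-s_k}\big(\nabla G^N_k - \nabla\widetilde G^N_k\big)(x_k)$, which by Assumption~\ref{ass:cost_func_lipschitz} has norm at most~$\tfrac{s_k}{1-s_k}\,l\delta$. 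Since~$p_k(\cdot\mid x_{k-1})$ and~$\widetilde p_k(\cdot\mid x_{k-1})$ are both probability densities on~$f_0^k(\mathcal{K}_0)$, they agree at some point~$\bar x_k$ (as in the proof of Theorem~\ref{thm:sensitivity_W2_MHE}), so~$\log\frac{p_k}{\widetilde p_k}(\bar x_k\mid x_{k-1}) = 0$; integrating the gradient bound along the segment from~$\bar x_k$ to~$x_k$, and using that~$f_0$ is Lipschitz (Assumption~\ref{ass:system_lipschitz}), so~$\text{diam}(f_0^k(\mathcal{K}_0)) \le c_f^k\,\text{diam}(\mathcal{K}_0)$, yields~$\big|\log\frac{p_k}{\widetilde p_k}(x_k\mid x_{k-1})\big| \le \tfrac{s_k}{1-s_k}\,l\delta\,c_f^k\,\text{diam}(\mathcal{K}_0)$, uniformly in~$x_{k-1}$.

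Summing over~$k$ then gives~$\subscr{D}{max}(\sigma,\widetilde\sigma) \le l\delta\,\text{diam}(\mathcal{K}_0)\sum_{k=1}^T \tfrac{s_k}{1-s_k} c_f^k$, which is~$\le\epsilon$ precisely under the stated hypothesis on~$\{s_k\}_{k=1}^T$; the claimed~$\epsilon$-differential privacy of the released trajectory then follows by applying Lemma~\ref{lemma:diff_priv_max_div} on the product space~$\Xsp^{T+1}$. I expect the main obstacle to be the first step: making rigorous the sense in which the entropy-regularized recursion induces a joint law on~$\Xsp^{T+1}$ with an absolutely continuous, Gibbs-type transition kernel, and hence that the quadratic displacement term — which is the source of the class-$\mathcal{K}$ term~$q(\delta)$ appearing in the single-time bound of Theorem~\ref{thm:sensitivity_W2_MHE} but is measurement-independent at the conditional level — genuinely cancels in the conditional log-ratio. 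Once the factorization and the kernel form are granted, the remaining estimates are essentially the argument of Theorem~\ref{thm:sensitivity_W2_MHE} carried out one conditional at a time and telescoped.
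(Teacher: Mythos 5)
Your proposal is correct and follows essentially the same route as the paper: factor the joint law via the Markov property into one-step conditionals (which the paper writes as the regularized problem against the Dirac measure $\partial_{f_0(x_{k-1})}$, i.e.\ exactly your Gibbs-type kernel), observe that the measurement-independent displacement term cancels in the conditional log-ratio so only the $\tfrac{s_k}{1-s_k}\,l\delta\,c_f^k\,\text{diam}(\mathcal{K}_0)$ contribution survives, and telescope over $k$. Your write-up is in fact somewhat more explicit than the paper's about why the $q(\delta)$ term from Theorem~\ref{thm:sensitivity_W2_MHE} disappears at the conditional level, which the paper only gestures at by saying ``applying similar steps.''
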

\begin{proof}
  Let~$G^N_k$ and~${\widetilde{G}}^N_{k}$ be the estimation objective
  functions at time instant~$k$, corresponding to the measurement
  sequences~$\yvec$ and~$\widetilde{\yvec}$ respectively, and
  let~$\sigma$ and~$\widetilde{\sigma}$ be the respective joint
  probability measures over the horizon~$\lbrace 0, \ldots, T
  \rbrace$. With a slight abuse of notation, we allow~$\sigma$
  and~$\widetilde{\sigma}$ to also denote the joint density
  function. We now have:
\begin{align*}
  \sigma(x_0, x_1, \ldots, x_T) = \rho_0(x_0) \sigma(x_1, \ldots, x_T | x_0) 
  = \rho_0(x_0) \rho_1(x_1|x_0) \rho_2(x_2|x_1) \ldots
  \rho_T(x_T|x_{T-1}),
\end{align*}
where~$\rho_k(x_k|x_{k-1})$ is the marginal density at~$x_k$ at time
instant~$k$, given that the distribution at time instant~$k-1$ is
concentrated at~$x_{k-1}$. Moreover, we note that
the~$W_2$-MHE~\eqref{eq:MHE_wasserstein_reg} yields a Markov process,
which allows us to express~$\rho_k(x_k | x_{k-1}, \ldots, x_0) =
\rho_k(x_k|x_{k-1})$.  Now,~$\rho_k(x_k|x_{k-1})$ is the density
corresponding to the measure obtained by the following:
\begin{align*}
	  \begin{aligned}
    \mu_k \in \arg \min_{\mu \in \mathcal{P}(\Xsp)}
      \left[ \frac{1}{2} W_2^2 (\mu , \partial_{f_0(x_{k-1})}) 
      + \eta \mathbb{E}_{\mu} \left[ G^N_k \right] \right.
     \left. - \left( \frac{1-s_k}{s_k} \right) S^{\mathcal{K}_k}(\mu) \right],
  \end{aligned}
\end{align*}
where~$\partial_{\xi}$ is the Dirac measure concentrated
at~$\xi$. From the above, we get that for all~$k \in \lbrace 0,
\ldots, T \rbrace$,~$\mu_k$, being the local minimizer is also a
critical point of the objective functional. 
Applying similar steps to those in the proof of
Theorem~\ref{thm:sensitivity_W2_MHE}, we obtain: 
\begin{align*}
  \left| \log \left( \frac{\rho_k}{\widetilde{\rho}_k}
    \right)(x|x_{k-1}) \right| \leq \left( \frac{s_k}{1-s_k} \right) l
  \delta c_f^k \text{diam}(\mathcal{K}_0).
\end{align*}
Now, we have:
\begin{align*}
  \left| \log \left( \frac{\sigma}{\widetilde{\sigma}} \right) (x_0,
    \ldots, x_T) \right| &\leq \sum_{k=1}^T \left| \log \left(
      \frac{\rho}
      {\widetilde{\rho}} \right) (x_k | x_{k-1})  \right| 
  \leq \sum_{k=1}^T \left( \frac{s_k}{1-s_k} \right) l \delta c_f^k
  \text{diam}(\mathcal{K}_0).
\end{align*}
By taking
\begin{align*}
  l \delta \text{diam}(\mathcal{K}_0) \sum_{k=1}^T \left(
    \frac{s_k}{1-s_k} \right) c_f^k \leq \epsilon,
\end{align*}
 we obtain the following inequality:
\begin{align*}
  \sum_{k=1}^T \left( \frac{s_k}{1-s_k} \right) c_f^k \leq
  \frac{\epsilon}{ l \delta \text{diam}(\mathcal{K}_0) },
\end{align*}
and that~$\subscr{D}{max}(\sigma , \widetilde{\sigma}) \leq \epsilon$.
\end{proof}
We note that for a given~$\epsilon$, the upper bound on the
sequence~$\lbrace s_k \rbrace$ decreases with~$\delta$.  In other
words, guaranteeing $\epsilon$-differential privacy w.r.t. measurement
sequences that are farther apart requires the addition of more noise
and a greater loss in estimation accuracy. This is because the
weighting on the entropic regularization term in the estimation
objective increases when~$s_k$ is reduced.  The same is the case
when~$\epsilon$ is reduced for a given~$\delta$, which corresponds to
a more stringent privacy requirement.

\subsection{Differentially private~KL-MHE}
We now design a differentially private~KL-moving-horizon estimator. We
begin by considering the entropy-regularized~KL-MHE formulation,
given by:
\begin{align}
	\begin{aligned}
          \mu_k \in \arg \min_{\mu \in \mathcal{P}(\Xsp)} & \left[
            \subscr{D}{KL}(\mu \| f_{0\#} \mu_{k-1}) +
            \eta \mathbb{E}_{\mu} \left[ G^N_k \right] \right. \\
          & ~~~ \left. -
            \left( \frac{1 - s_k}{s_k} \right) S^{\mathcal{K}_k}(\mu) \right], \\
          \text{given}~~\mu_0 \in \mathcal{P}(\Xsp)&,
	\end{aligned}
	\label{eq:MHE_KL-div_reg}
\end{align}
for~$s_k \in (0,1]$. The corresponding recursive update scheme for~\eqref{eq:MHE_KL-div_reg}
is given by:
\begin{align}
  \rho_k (x) &= c_k \left( f_{0\#}\rho_{k-1} (x) \right)^{s_k} e^{- \eta s_k G^N_k(x)},
  \label{eq:MHE_KL-div_reg_rec_est}
\end{align}
which will be derived in the proof of Theorem~\ref{thm:sensitivity_KL_MHE} below.

The following theorem provides a sufficient upper
bound on~$s_k$ such that the entropy-regularized~KL-MHE 
in~\eqref{eq:MHE_KL-div_reg} is~$\epsilon_T$-differentially private at a time instant~$T$,
while ignoring the correlations between the estimates~$\mu_k$ across time.
\begin{theorem}\longthmtitle{Sensitivity of KL-MHE}
  Given two~$\delta$-adjacent measurement sequences~$\yvec,
  \widetilde{\yvec} \in \Ysp^{T+N+1}$, under
  Assumption~\ref{ass:cost_func_lipschitz}, we have that the estimates
  generated by~\eqref{eq:MHE_KL-div_reg} satisfy~$\subscr{D}{max} \left(
    \mu_T , \widetilde{\mu}_T \right) \leq \epsilon_T$
  if~$\sum_{k=1}^T \left( \prod_{i=k}^T s_i \right) \leq \epsilon_T
  \left(2\eta \max_{k \in \lbrace 0, \ldots, T \rbrace}\left( \alpha_k
      + l c_f^k \delta \text{diam}(\mathcal{K}_0)
    \right)\right)^{-1}$, where~$\alpha_k = \min_{\xi \in
    f^k_0(\mathcal{K}_0)} \left| \left( G^N_k - {\widetilde{G}}^N_{k}
    \right)(\xi) \right| $.
      \label{thm:sensitivity_KL_MHE}
\end{theorem}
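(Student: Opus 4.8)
The plan is to mirror the argument used for the asymptotic stability of KL-MHE: make the recursion~\eqref{eq:MHE_KL-div_reg_rec_est} explicit, unroll it along the orbits of $f_0$, and then compare the two runs driven by the $\delta$-adjacent measurement sequences. First I would derive~\eqref{eq:MHE_KL-div_reg_rec_est}. The critical-point condition for the objective in~\eqref{eq:MHE_KL-div_reg}, together with the normalization constraint $\int_{\Xsp} d\mu_k = 1$, reads (analogously to the proof of Theorem~\ref{thm:sensitivity_W2_MHE}, with the Kantorovich potential replaced by the first variation $\log(\rho_k/f_{0\#}\rho_{k-1})$ of the KL-divergence term)
\[
  \log\frac{\rho_k}{f_{0\#}\rho_{k-1}}(x) + \eta G^N_k(x) + \Big(\frac{1-s_k}{s_k}\Big)\log\rho_k(x) = c .
\]
Collecting the $\log\rho_k$ terms, whose coefficient is $1 + \frac{1-s_k}{s_k} = s_k^{-1}$, and exponentiating gives $\rho_k(x) = c_k\,(f_{0\#}\rho_{k-1}(x))^{s_k} e^{-\eta s_k G^N_k(x)}$ with $c_k$ a normalizing constant, which is~\eqref{eq:MHE_KL-div_reg_rec_est}. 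Since the positive reweighting does not alter supports, $\mathcal{K}_k = f_0(\mathcal{K}_{k-1}) = f_0^k(\mathcal{K}_0)$, and $f_0$ being Lipschitz (Assumption~\ref{ass:system_lipschitz}) with constant $c_f$ yields $\text{diam}(\mathcal{K}_k) \le c_f^k\,\text{diam}(\mathcal{K}_0)$.

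Next I would unroll. Using $f_{0\#}\rho_{k-1}(x) = \rho_{k-1}(f_0^{-1}(x))$ as in the KL-MHE stability proof and setting $u_k(z) := \log\rho_k(f_0^k(z))$, the recursion becomes $u_k = s_k u_{k-1} + \log c_k - \eta s_k\, G^N_k(f_0^k(\cdot))$, so iterating down to $u_0 = \log\rho_0$ gives, for $z \in \mathcal{K}_0$,
\[
  u_T(z) = \Big(\prod_{i=1}^T s_i\Big)\log\rho_0(z) + \log C_T - \eta\sum_{k=1}^T\Big(\prod_{i=k}^T s_i\Big) G^N_k(f_0^k(z)),
\]
with $C_T$ an accumulated normalizing constant. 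Running the same computation for the adjacent sequence $\widetilde\yvec$ (costs $\widetilde G^N_k$, densities $\widetilde\rho_k$, constant $\widetilde C_T$), subtracting, and using $\rho_0 = \widetilde\rho_0$ (since $\mu_0 = \widetilde\mu_0$) to cancel the initial term, I obtain for every $\xi = f_0^T(z) \in \mathcal{K}_T$
\[
  \log\frac{\rho_T}{\widetilde\rho_T}(\xi) = \log\frac{C_T}{\widetilde C_T} - \eta\sum_{k=1}^T\Big(\prod_{i=k}^T s_i\Big)\big(G^N_k - \widetilde G^N_k\big)\big(f_0^{\,k-T}(\xi)\big).
\]

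It then remains to bound the two summands uniformly in $\xi \in \mathcal{K}_T$. For the weighted sum, note that $f_0^{\,k-T}(\xi) \in \mathcal{K}_k$; if $\xi_k^\star \in \mathcal{K}_k$ attains $\alpha_k = \min_{\zeta \in \mathcal{K}_k}|(G^N_k - \widetilde G^N_k)(\zeta)|$, then Assumption~\ref{ass:cost_func_lipschitz}(4), i.e. $\|\nabla(G^N_k - \widetilde G^N_k)\| \le l\delta$, together with the diameter bound along a straight segment gives $|(G^N_k - \widetilde G^N_k)(\zeta)| \le \alpha_k + l\, c_f^k\, \delta\, \text{diam}(\mathcal{K}_0) =: \beta_k$ for all $\zeta \in \mathcal{K}_k$; hence that sum is at most $B := \eta\sum_{k=1}^T(\prod_{i=k}^T s_i)\,\beta_k$ in modulus. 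For the constant, exponentiating the displayed identity and integrating over $\mathcal{K}_T$, the relations $\int_{\mathcal{K}_T}\rho_T = \int_{\mathcal{K}_T}\widetilde\rho_T = 1$ yield $\widetilde C_T/C_T = \mathbb{E}_{\widetilde\mu_T}\big[\exp(-\eta\sum_{k}(\prod_{i\ge k}s_i)(G^N_k - \widetilde G^N_k)(f_0^{\,k-T}(\cdot)))\big]$, whose integrand lies in $[e^{-B}, e^{B}]$, so $|\log(C_T/\widetilde C_T)| \le B$. Therefore $\subscr{D}{max}(\mu_T, \widetilde\mu_T) = \sup_{\xi \in \mathcal{K}_T}|\log(\rho_T/\widetilde\rho_T)(\xi)| \le 2B \le 2\eta\big(\max_{0 \le k \le T}\beta_k\big)\sum_{k=1}^T\prod_{i=k}^T s_i$, and imposing that this last quantity be $\le \epsilon_T$ is exactly the stated condition on $\sum_{k=1}^T\prod_{i=k}^T s_i$.

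The step I expect to be the main obstacle is controlling the normalizing constants. Unlike the measurement-dependent term, the $c_k$ depend on the entire coupled history of both runs, and comparing them step by step would accumulate a factor $2$ at every instant, destroying the bound. The remedy is precisely to unroll all the way down to the common initial density $\rho_0 = \widetilde\rho_0$, so that only the single accumulated ratio $C_T/\widetilde C_T$ survives, and then to control that ratio through the one expectation identity above; this keeps the overhead to a single factor $2$ and explains the form of the bound, in particular why the baseline values $\alpha_k$ appear.
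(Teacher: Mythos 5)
Your proof is correct and follows essentially the same route as the paper: derive the regularized update $\rho_k = c_k (f_{0\#}\rho_{k-1})^{s_k} e^{-\eta s_k G^N_k}$, unroll along the orbit of $f_0$ down to the common $\rho_0$, bound the measurement-dependent sum via Assumption~\ref{ass:cost_func_lipschitz}(4) and the diameter growth $c_f^k\,\mathrm{diam}(\mathcal{K}_0)$ using the points attaining $\alpha_k$, and absorb the accumulated normalization ratio with a second copy of the same bound (factor $2$). Your handling of $C_T/\widetilde C_T$ via the expectation identity is a harmless variant of the paper's bound by the supremum of the ratio of integrands, and yields the identical final condition on $\sum_{k=1}^T \prod_{i=k}^T s_i$.
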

\begin{proof}
  Let~$G^N_k$ and~${\widetilde{G}}^N_{k}$ be the estimation objective
  functions at time instant~$k$, corresponding to the measurement
  sequences~$\yvec$ and~$\widetilde{\yvec}$ respectively, and
  let~$\mu_k$ and~$\widetilde{\mu}_k$ be the respective estimated
  probability measures, with~$\rho_k, \widetilde{\rho}_k$  the corresponding
  density functions. From~\eqref{eq:MHE_KL-div_reg}, we get that
  for all~$k \in \lbrace 0, \ldots, T \rbrace$,~$\mu_k$, being the
  local minimizer is also a critical point of the objective
  functional. We therefore obtain: 
\begin{align*}
  \frac{\delta}{\delta \mu} & \left[ \subscr{D}{KL}(\mu \| f_{0\#}
    \mu_{k-1}) + \eta \mathbb{E}_{\mu} \left[ G^N_k \right]
    - \left( \frac{1 - s_k}{s_k} \right) S^{\mathcal{K}_k}(\mu)  \right] \bigg |_{\mu_k} \\
  &= \bar{c}_k,
\end{align*}
from which we derive that:
\begin{align*}
  \log \left( \frac{\rho_k}{f_{0\#}\rho_{k-1}} \right) (x) + \eta
  G^N_k(x) + \left( \frac{1 - s_k}{s_k} \right) \log \rho_k (x) =
  \bar{c}_k.
\end{align*}
The above equation can be rewritten as follows:
\begin{align*}
  \rho_k (x) = c_k \left( f_{0\#}\rho_{k-1} (x) \right)^{s_k} e^{- \eta s_k G^N_k(x)} 
  = c_k \left( \rho_{k-1} (f_0^{-1}(x)) \right)^{s_k} e^{- \eta s_k
    G^N_k(x)},
\end{align*}
where~$c_k$ is the normalization constant. We therefore obtain:
\begin{align*}
  \rho_k (f_0(x)) = c_k \left( \rho_{k-1} (x) \right)^{s_k} e^{- \eta
    s_k G^N_k(f_0(x))}.
\end{align*}
Expanding the above, we get:
\begin{align*}
  \rho_T (f_0^T(x)) = C_T \left( \rho_0(x) \right)^{\prod_{k=1}^T s_k}
  e^{- \eta \sum_{k=1}^T \left( \prod_{i=k}^T s_i \right)
    G^N_k(f_0^k(x))},
\end{align*}
where~$C_T = c_1 c_2 \ldots c_T$. Similarly, we have:
\begin{align*}
  \widetilde{\rho}_T (f_0^T(x)) = \widetilde{C}_T \left(
    \widetilde{\rho}_0(x) \right)^{\prod_{k=1}^T s_k} e^{- \eta
    \sum_{k=1}^T \left( \prod_{i=k}^T s_i \right)
    {\widetilde{G}}^N_{k}(f_0^k(x))},
\end{align*}
where~$\widetilde{C}_T = \widetilde{c}_1 \widetilde{c}_2 \ldots
\widetilde{c}_T$
and~$\rho_0 = \widetilde{\rho}_0$, as we assume that the estimator
starts with the same initial~$\mu_0$.  From the above two equations,
we obtain:
\begin{align*}
  \log \left( \frac{\rho_T}
    {\widetilde{\rho}_T} \right) (f_0^T(x)) = \log \left( \frac{C_T}{\widetilde{C}_T} \right) 
   - \eta \sum_{k=1}^T \left( \prod_{i=k}^T s_i
  \right) \left( G^N_k - {\widetilde{G}}^N_{k} \right) (f_0^{k}(x)).
\end{align*}
%
The max-divergence between~$\mu_T$ and~$\widetilde{\mu}_T$ can be
upper bounded now by:
\begin{align*}
  \subscr{D}{max}(\mu_T , \widetilde{\mu}_T) &=
  \sup_{x \in \mathcal{K}_0} \left| \log \left( \frac{\rho_T}{\widetilde{\rho}_T} \right) (f_0^T(x)) \right| \\
  &\leq  \left| \log \left( \frac{C_T}{\widetilde{C}_T} \right)  \right| + \sup_{x \in \mathcal{K}_0} \eta \sum_{k=1}^T \left( \prod_{i=k}^T s_i \right) \times 
   \left| \left( G^N_k - {\widetilde{G}}^N_{k} \right) (f_0^{k}(x)) \right| \\
  &\leq 2 \sup_{x \in \mathcal{K}_0} \eta \sum_{k=1}^T \left(
    \prod_{i=k}^T s_i \right) \left| \left( G^N_k -
      {\widetilde{G}}^N_{k} \right) (f_0^{k}(x)) \right|,
\end{align*}
where the final inequality is due to the following (note that we use the fact that~$\rho = \tilde{\rho}$, as
mentioned earlier):
\begin{align*}
   \left| \log \left( \frac{C_T}{\widetilde{C}_T} \right)  \right|  
  &= \left| \log \left( \frac{\int_{x \in \mathcal{K}_0} \left(
          \rho_0(x) \right)^{\prod_{k=1}^T s_k} e^{- \eta \sum_{k=1}^T
          \left( \prod_{i=k}^T s_i \right) G^N_k(f_0^k(x))}}{\int_{x
          \in \mathcal{K}_0} \left( \widetilde{\rho}_0(x)
        \right)^{\prod_{k=1}^T s_k}
        e^{- \eta \sum_{k=1}^T \left( \prod_{i=k}^T s_i \right) {\widetilde{G}}^N_{k}(f_0^k(x))}} \right)  \right| \\
  & \leq \sup_{x \in \mathcal{K}_0} \left| \log \left( \frac{e^{- \eta
          \sum_{k=1}^T \left( \prod_{i=k}^T s_i \right)
          G^N_k(f_0^k(x))}}
      {e^{- \eta \sum_{k=1}^T \left( \prod_{i=k}^T s_i \right) {\widetilde{G}}^N_{k}(f_0^k(x))}} \right)  \right| \\
  &\leq \sup_{x \in \mathcal{K}_0} \eta \sum_{k=1}^T \left(
    \prod_{i=k}^T s_i \right) \left| \left( G^N_k -
      {\widetilde{G}}^N_{k} \right) (f_0^{k}(x)) \right|.
\end{align*}
%
%
We now have, for all~$k \in \lbrace 1, \ldots, T \rbrace$:
\begin{align*}
  \left( G^N_k - {\widetilde{G}}^N_{k} \right) (f_0^{k}(x)) =
  \left( G^N_k - {\widetilde{G}}^N_{k} \right) (\xi_k) 
   + \int_0^1 \nabla \left( G^N_k -
    {\widetilde{G}}^N_{k} \right) (\gamma_k(t)) \cdot
  \dot{\gamma}_k(t) dt,
\end{align*}
where~$\gamma_k(0) = \xi_k$ and~$\gamma_k(1) = f_0^k(x)$. From
Assumption~\ref{ass:cost_func_lipschitz}, we have~$\left\| \nabla
  \left( G^N_k - {\widetilde{G}}^N_{k} \right) (\xi) \right\| \leq l
\delta$.  Moreover, let~$\xi_k \in f_0^k(\mathcal{K}_0)$ such
that~$\left| \left( G^N_k - {\widetilde{G}}^N_{k} \right) (\xi_k)
\right| = \min_{f_0^k(\mathcal{K}_0)} \left| \left( G^N_k -
    {\widetilde{G}}^N_{k} \right) \right| = \alpha_k$, and we obtain:
\begin{align*}
  \left| \left( G^N_k - {\widetilde{G}}^N_{k} \right) (f_0^{k}(x))
  \right|
  \leq \alpha_k + l \delta \text{diam}(f_0^k(\mathcal{K}_0)) 
  \leq \alpha_k + l c_f^k \delta \text{diam}(\mathcal{K}_0).
\end{align*}
This yields the following inequality:
\begin{align*}
  2 \sup_{x \in \mathcal{K}_0} \eta \sum_{k=1}^T
  \left( \prod_{i=k}^T s_i \right) \left| \left( G^N_k - {\widetilde{G}}^N_{k} \right) (f_0^{k}(x)) \right| 
  &\leq 2 \eta \sum_{k=1}^T
  \left( \prod_{i=k}^T s_i \right) \left( \alpha_k + l c_f^k \delta \text{diam}(\mathcal{K}_0) \right) \\
  &\leq 2 \eta \max_{k}\left( \alpha_k + l c_f^k \delta
    \text{diam}(\mathcal{K}_0) \right) \sum_{k=1}^T \left(
    \prod_{i=k}^T s_i \right).
\end{align*}
We now let:
\begin{align*}
  2 \eta \max_{k}\left( \alpha_k + l c_f^k \delta
    \text{diam}(\mathcal{K}_0) \right) \sum_{k=1}^T \left(
    \prod_{i=k}^T s_i \right) \leq \epsilon_T,
\end{align*}
which yields the bound
\begin{align*}
  \sum_{k=1}^T \left( \prod_{i=k}^T s_i \right) \leq
  \frac{\epsilon_T}{2\eta \max_{k}\left( \alpha_k + l c_f^k \delta
      \text{diam}(\mathcal{K}_0) \right)},
\end{align*}
and we get~$\subscr{D}{max} (\mu_T , \widetilde{\mu}_T) \leq \epsilon_T$.
%
\end{proof}
We note here that, in practice, with the choice of a sufficiently
large domain~$\mathcal{K}_0$, we can ensure that~$\alpha_k = \min_{\xi
  \in f^k_0(\mathcal{K}_0)} \left| \left( G^N_k -
    {\widetilde{G}}^N_{k} \right)(\xi) \right| = 0$ for all~$k \in
\lbrace 0, \ldots, T \rbrace$.  This is owing to the fact that for a
large enough~$\mathcal{K}_0$, we will have~$\min_{\xi \in
  f^k_0(\mathcal{K}_0)} \left( G^N_k - {\widetilde{G}}^N_{k}
\right)(\xi) \leq 0 \leq \max_{\xi \in f^k_0(\mathcal{K}_0)} \left(
  G^N_k - {\widetilde{G}}^N_{k} \right)(\xi)$. Moreover, since the
function~$G^N_k - {\widetilde{G}}^N_{k}$ is continuous, there must
therefore exist a point~$\xi^*$ such that $\left( G^N_k -
  {\widetilde{G}}^N_{k} \right) (\xi^*) = 0$.
  
As with the~$W_2$-MHE, we now characterize the differential privacy of
the KL-MHE over a horizon~$\lbrace 0, \ldots, T \rbrace$. We recall
that the KL-MHE yields a sequence of distributions~$\lbrace \mu_k
\rbrace_{k=0}^T$ over the time horizon.  Differential privacy over the
horizon requires an upper bound on the sensitivity of the joint
distribution~$\sigma$ over the horizon, where~$\mu_k$ is the marginal
of~$\sigma$ at the time instant~$k$.  As before, with a slight abuse
of notation, letting~$\sigma$ also denote the joint density function,
we have:
\begin{align*}
	  \sigma(x_0, x_1, \ldots, x_T) = \rho_0(x_0) \sigma(x_1, \ldots, x_T | x_0)
  = \rho_0(x_0) \rho_1(x_1|x_0) \rho_2(x_2|x_1) \ldots
  \rho_T(x_T|x_{T-1}).
\end{align*}
From the above, we infer that to estimate the sensitivity of the joint
density function, we must estimate the sensitivity of the
conditionals~$\rho_k(x_k | x_{k-1})$.  The conditional~$\rho_k(x_k |
x_{k-1})$ at any time instant~$k$, is obtained from the coupling
between the marginal distributions~$\mu_k$ and~$\mu_{k-1}$.
%

We now obtain an upper bound for the case where the marginals~$\mu_k$ are
independently coupled. In other
words, we suppose that:
\begin{align}
\begin{aligned}
	  \sigma(x_0, x_1, \ldots, x_T) = \rho_0(x_0) \sigma(x_1, \ldots, x_T | x_0) 
  = \rho_0(x_0) \rho_1(x_1) \rho_2(x_2) \ldots
  \rho_T(x_T).
\end{aligned}
\label{eq:KL_independent_coupling}
\end{align}
\begin{theorem}\longthmtitle{Differentially private KL-MHE}
  Given two~$\delta$-adjacent measurement sequences~$\yvec,
  \widetilde{\yvec} \in \Ysp^{T+N+1}$, under
  Assumption~\ref{ass:cost_func_lipschitz} and the independent
  coupling~\eqref{eq:KL_independent_coupling}, we have that the
  estimates generated by~\eqref{eq:MHE_KL-div_reg}
  satisfy~$\subscr{D}{max} \left( \sigma , \widetilde{\sigma} \right)
  \leq \epsilon$ if~$ \sum_{k=1}^T \sum_{l=1}^k \left(\prod_{i=l}^k
    s_i \right) \leq \epsilon \left( 2 \eta \max_{k}\left( \alpha_k +
      l c_f^k \delta \text{diam}(\mathcal{K}_0) \right)
  \right)^{-1}$, where~$\alpha_k = \min_{\xi \in
    f^k_0(\mathcal{K}_0)} \left| \left( G^N_k - {\widetilde{G}}^N_{k}
    \right)(\xi) \right| $.
    \label{thm:diff_privacy_KL_MHE}
\end{theorem}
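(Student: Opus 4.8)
The plan is to use the independent-coupling assumption~\eqref{eq:KL_independent_coupling} to reduce the sensitivity of the joint distribution $\sigma$ to the accumulated sensitivities of the marginals $\mu_k$, each of which is already controlled by a horizon-truncated version of the computation in the proof of Theorem~\ref{thm:sensitivity_KL_MHE}. Under~\eqref{eq:KL_independent_coupling} we have $\sigma = \prod_{k=0}^T \rho_k(x_k)$ and $\widetilde\sigma = \prod_{k=0}^T \widetilde\rho_k(x_k)$, so the joint log-ratio splits additively, and since the two estimators start from the common $\mu_0$ (whence $\rho_0 = \widetilde\rho_0$, exactly as in Theorem~\ref{thm:sensitivity_KL_MHE}),
\begin{align*}
  \subscr{D}{max}(\sigma,\widetilde\sigma) = \sup_{x_0,\dots,x_T} \left| \sum_{k=1}^{T} \log\left( \frac{\rho_k}{\widetilde\rho_k} \right)(x_k) \right| \leq \sum_{k=1}^{T} \sup_{x \in f_0^k(\mathcal{K}_0)} \left| \log\left( \frac{\rho_k}{\widetilde\rho_k} \right)(x) \right|.
\end{align*}

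It then remains to bound each summand, and for this I would reproduce the chain of identities in the proof of Theorem~\ref{thm:sensitivity_KL_MHE} verbatim, but stopping the expansion of~\eqref{eq:MHE_KL-div_reg_rec_est} at time $k$ rather than $T$. This gives $\log(\rho_k/\widetilde\rho_k)(f_0^k(x)) = \log(C_k/\widetilde C_k) - \eta \sum_{m=1}^{k}(\prod_{i=m}^{k} s_i)(G^N_m - \widetilde G^N_m)(f_0^m(x))$; bounding the normalization term $|\log(C_k/\widetilde C_k)|$ exactly as before, and using the pointwise estimate $|(G^N_m - \widetilde G^N_m)(f_0^m(x))| \leq \alpha_m + l\,c_f^m\,\delta\,\text{diam}(\mathcal{K}_0)$ (which combines the $l\delta$-bound on $\nabla(G^N_m - \widetilde G^N_m)$ from Assumption~\ref{ass:cost_func_lipschitz} with the definition of $\alpha_m$), yields
\begin{align*}
  \sup_{x \in f_0^k(\mathcal{K}_0)} \left| \log\left( \frac{\rho_k}{\widetilde\rho_k} \right)(x) \right| \leq 2\eta \max_{j}\left( \alpha_j + l\,c_f^j\,\delta\,\text{diam}(\mathcal{K}_0) \right) \sum_{m=1}^{k}\prod_{i=m}^{k} s_i.
\end{align*}

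Summing over $k = 1,\dots,T$ and combining with the displayed bound on $\subscr{D}{max}(\sigma,\widetilde\sigma)$ above gives $\subscr{D}{max}(\sigma,\widetilde\sigma) \leq 2\eta \max_{k}(\alpha_k + l\,c_f^k\,\delta\,\text{diam}(\mathcal{K}_0)) \sum_{k=1}^{T}\sum_{l=1}^{k}\prod_{i=l}^{k} s_i$, so imposing that the right-hand side not exceed $\epsilon$ produces exactly the stated bound, and Lemma~\ref{lemma:diff_priv_max_div} then upgrades $\subscr{D}{max}(\sigma,\widetilde\sigma) \leq \epsilon$ to $\epsilon$-differential privacy over the horizon. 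The argument is essentially mechanical given Theorem~\ref{thm:sensitivity_KL_MHE}; the one genuinely new step is recognizing that independent coupling makes the joint log-ratio a plain sum of per-time marginal log-ratios, so that the single-time sensitivity estimates simply accumulate. The main obstacle is really just bookkeeping: keeping the Lipschitz constant $l$ distinct from the summation indices (hence the renaming to $m$, $j$ above), and noticing that the truncated recursion must be run for every $k \le T$, which is what produces the double sum over $(k,l)$ rather than the single sum of Theorem~\ref{thm:sensitivity_KL_MHE}.
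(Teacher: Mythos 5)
Your proposal is correct and follows essentially the same route as the paper: use the independent coupling to split $\log(\sigma/\widetilde{\sigma})$ into the sum $\sum_{k=1}^T \log(\rho_k/\widetilde{\rho}_k)(x_k)$ (the $k=0$ term vanishing since $\rho_0 = \widetilde{\rho}_0$), bound $\subscr{D}{max}(\sigma,\widetilde{\sigma})$ by $\sum_{k=1}^T \subscr{D}{max}(\mu_k,\widetilde{\mu}_k)$, and control each marginal term by the horizon-truncated version of the argument in Theorem~\ref{thm:sensitivity_KL_MHE}, which produces the double sum $\sum_{k=1}^T\sum_{l=1}^k \prod_{i=l}^k s_i$. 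The only cosmetic difference is that you spell out the truncated rerun of that proof explicitly, whereas the paper simply cites it.
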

\begin{proof}
	  Let~$G^N_k$ and~${\widetilde{G}}^N_{k}$ be the estimation objective
  functions at time instant~$k$, corresponding to the measurement
  sequences~$\yvec$ and~$\widetilde{\yvec}$ respectively, and
  let~$\sigma$ and~$\widetilde{\sigma}$ be the respective joint
  probability measures over the horizon~$\lbrace 0, \ldots, T
  \rbrace$. With a slight abuse of notation, we allow~$\sigma$
  and~$\widetilde{\sigma}$ to also denote the joint density
  function. From~\eqref{eq:KL_independent_coupling}, we get:
  \begin{align*}
    \log \left( \frac{\sigma}{\widetilde{\sigma}} \right) (x_0,
    \ldots, x_T) = \sum_{k=1}^T \log \left(
      \frac{\rho_k}{\widetilde{\rho}_k} \right)(x_k),
  \end{align*}
  which implies that:
  \begin{align*}
    \subscr{D}{max}(\sigma, \widetilde{\sigma}) \leq \sum_{k=1}^T
    \subscr{D}{max}(\mu_k, \widetilde{\mu}_k).
  \end{align*}
  From the proof of Theorem~\ref{thm:sensitivity_KL_MHE} on the sensitivity of KL-MHE,
  we further get:
  \begin{align*}
    \subscr{D}{max}(\sigma, \widetilde{\sigma}) \leq \sum_{k=1}^T \subscr{D}{max}(\mu_k, \widetilde{\mu}_k) 
     \leq 2 \eta \max_{k}\left( \alpha_k + l c_f^k \delta
      \text{diam}(\mathcal{K}_0) \right) \sum_{k=1}^T \sum_{l=1}^k
    \left( \prod_{i=l}^k s_i \right).
  \end{align*}
  Therefore, it holds that~$\subscr{D}{max}(\sigma, \widetilde{\sigma}) \leq \epsilon$ if:
  \begin{align*}
  		\sum_{k=1}^T \sum_{l=1}^k \left(\prod_{i=l}^k s_i \right) \leq \frac{\epsilon}{	2 \eta \max_{k}\left( \alpha_k + l c_f^k \delta
    \text{diam}(\mathcal{K}_0) \right)}.
  \end{align*}
\end{proof}
\section{Simulation results}
\label{sec:numerical_expt}
In this section, we present results from numerical simulations
of the estimators studied in this paper. 
The simulations were performed in MATLAB (version R2017a) on 
a~2.5 GHz Intel Core i5 processor.

We considered the following nonlinear discrete-time system:
\begin{align*}
	x_1(k+1) &= x_1(k) + \tau x_2(k), \\
	x_2(k+1) &= x_2(k) - \tau \frac{x_1(k)}{1 + |x_1(k)|^2 + |x_2(k)|^2} + w_k, \\
	y(k) &= x_1(k) + v_k,
\end{align*}
with~$\tau = 0.1$,~$w_k$ and~$v_k$ are i.i.d disturbances, sampled uniformly 
from the intervals~$[-0.1, 0.1]$ and~$[-0.15, 0.15]$ respectively,
and a quadratic estimation objective function $J_T (\mathbf{y}^{(1)}_{0:T}, \mathbf{y}^{(2)}_{0:T})
 = \| \mathbf{y}^1_{0:T} - \mathbf{y}^2_{0:T} \|^2$.

We first present the simulation results for~$W_2$-MHE.  We ran
30~trials of the estimator~\eqref{eq:MHE_opt} on the same measurement
sequence, with randomly generated initial conditions and over a time
horizon of length~$T = 100$.  The length of the moving-horizon was
chosen to be~$N = 10$.  Figure~\ref{fig:W2-MHE} contains the plots of
the mean of the estimates along with the true states. The root mean
squared error (RMSE) for the mean state estimate sequences were found
to be ${z_1}^{\RMSE} = 0.0856$ and~${z_2}^{\RMSE} = 0.0846$ for the
estimates of~$x_1$ and~$x_2$, respectively. The average time for
computing the state estimate through the
minimization~\eqref{eq:MHE_opt} using the~$fminunc$ function in MATLAB 
was observed to be~$\subscr{t}{comp} =
0.012 \pm 0.02s$. 
%
\begin{figure}[!h]
	\begin{center}
          \hspace{-0.15in} \includegraphics[width=0.65\textwidth]{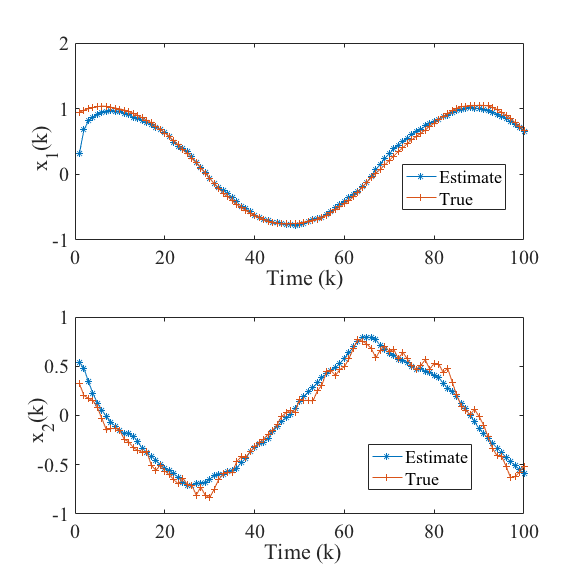}
          \captionsetup{justification=centering}
	\end{center}
   \caption{Mean state estimates from 30 trials of~$W2$-MHE}
	\label{fig:W2-MHE}
\end{figure}
%

We then implemented the estimator~\eqref{eq:MHE_KL-div} with~30
samples, over a time horizon of length~$T = 100$.  The length of the
moving-horizon was chosen to be~$N = 10$.  Figure~\ref{fig:KL-MHE}
contains the plots of the mean of the estimates along with the true
states.  The root mean squared error (RMSE) for the mean state
estimate sequences were found to be ${z_1}^{\RMSE} = 0.1073$
and~${z_2}^{\RMSE} = 0.1144$ for the estimates of~$x_1$ and~$x_2$,
respectively.  The average run-time for the
minimization~\eqref{eq:MHE_KL-div} by a resampling method was observed to
be~$\subscr{t}{comp} = ( 4.8 \pm 0.4 ) \times 10^{-4} s$.  

In simulation, with~30 samples, we find that the~$W_2$-MHE performs
better with respect to the root mean squared error, while the KL-MHE is much faster.
The performance of the KL-MHE is determined by the richness of the sample
set and effectiveness of the resampling procedure, choices that depend
on context and experience. In this manuscript, we did not attempt to
investigate improvements in performance with respect to these
choices. The performance of~$W_2$-MHE does not necessarily improve with the 
richness of the sample set, but for systems for which~${\Sigma_T}^{-1}(\yvec_{0:T})$
is not a singleton, a richer sample set allows for a more complete characterization of the
set of feasible estimates.

\begin{figure}[!h]
	\begin{center}
           \includegraphics[width=0.65\textwidth]{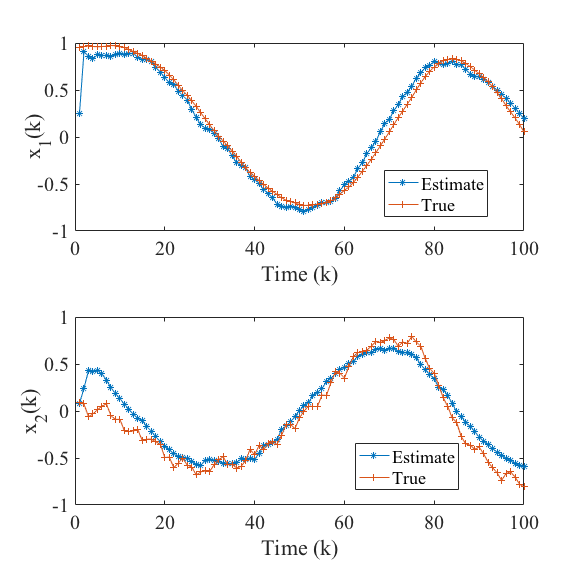}
          \captionsetup{justification=centering}
	\end{center}
   \caption{Mean state estimates from~KL-MHE with 30 samples}
	\label{fig:KL-MHE}
\end{figure}
\begin{figure}[!h]
	\begin{center}
          \includegraphics[width=0.65\textwidth]{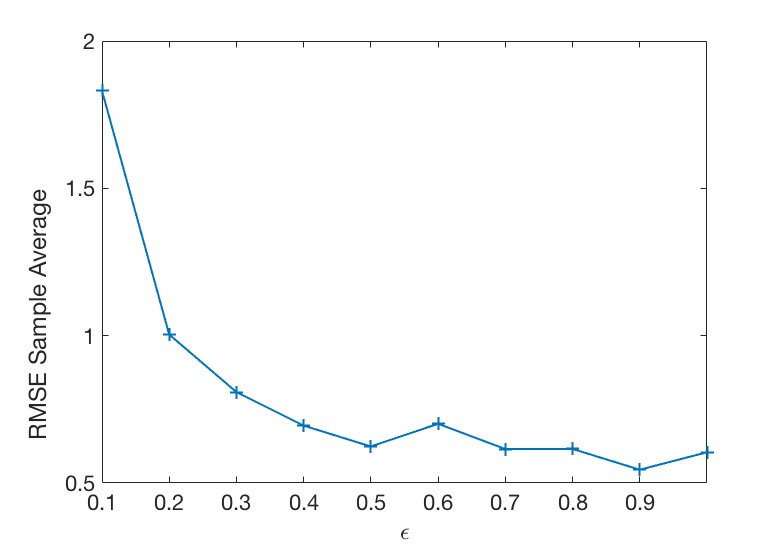}
          \captionsetup{justification=centering}
	\end{center}
   \caption{RMSE in estimates of state~$x_1$ for $W_2$-MHE, averaged over 30 samples for different values of~$\epsilon$}
	\label{fig:accuracy_privacy_tradeoff}
\end{figure}
Figure~\ref{fig:accuracy_privacy_tradeoff} illustrates the typical
trade-off between accuracy and privacy in moving-horizon estimation.
We considered constant weights~$s_k = s$ for the entropic
regularization terms in~\eqref{eq:MHE_wasserstein_reg}
and~\eqref{eq:MHE_KL-div_reg}. The values of~$s$ were chosen such that
they satisfied the bounds specified in
Theorems~\ref{thm:diff_privacy_W2_MHE}
and~\ref{thm:diff_privacy_KL_MHE} for $\epsilon$-differential privacy
of the estimators over the horizon. 
%
%
 In Figure~\ref{fig:accuracy_privacy_tradeoff}, we plot
the RMSE (for the estimates of the state~$x_1$) for~$W_2$-MHE,
averaged over the~$30$ samples, specifying the accuracy, for different
values of~$\epsilon$, the privacy parameter.  We recall that a higher
value of~$\epsilon$ indicates a less stringent privacy requirement.
We notice that the the accuracy of the estimators improves with an
increase in the privacy parameter. 
%
%
\section{Conclusions}
\label{sec:conclusions}
In this work, we laid out a unifying probabilistic framework for
moving-horizon estimation.  We clearly established the connection
between the classical notion of strong local observability and the
stability of moving-horizon estimation, for nonlinear discrete-time
systems. We then proposed a differentially private mechanism 
based on entropic regularization and derived conditions under 
which $\epsilon$-differential privacy is guaranteed at any given time
instant and over time horizons.
As an extension to this work, we intend to include distributional
constraints in the moving-horizon estimation framework.
An important consideration in the estimation problem, in addition to
the asymptotic stability, is the rate of convergence of the
observer. It is of interest to obtain convergence rate bounds for the
moving-horizon estimators proposed in this paper, and to compare their
performance for various choices of the metric (or divergence) in the
unifying formulation, which will be undertaken in our future work.
\bibliographystyle{plain}
\bibliography{alias,SMD-add,SM,JC}

\end{document}